\documentclass[reqno, 12pt]{amsart}
\usepackage[utf8]{inputenc}
\usepackage{amssymb,amsmath,amsfonts,amsthm,calrsfs,mathtools}
\usepackage{color}
\usepackage[english]{babel}
\usepackage[T1]{fontenc}
\usepackage{latexsym}
\usepackage{enumitem}
\usepackage{tabularx}
\usepackage{booktabs}
\usepackage[a4paper,top=3cm,bottom=2cm,left=3cm,right=3cm,marginparwidth=1.75cm]{geometry}
\usepackage[colorinlistoftodos]{todonotes}
\usepackage[colorlinks=true, allcolors=red]{hyperref}
\usepackage{tikz}
\usepackage{amsmath,amssymb}
\usepackage{caption}

\theoremstyle{plain}
\newtheorem{theorem}{Theorem}
\newtheorem{lemma}{Lemma}

\newtheorem{corollary}{Corollary}

\theoremstyle{definition}

\newtheorem{definition}{Definition}

\newtheorem{remark}{Remark}

\newtheorem{example}{Example}

\providecommand{\keywords}[1]

\title{Linear Dynamics of composition operators on the Schwartz spaces}

\author[J. Henriquez]{Javier Henriquez-Amador}
\address{
Javier Henriquez-Amador:
\endgraf
  Department of Mathematics: Analysis, Logic and Discrete Mathematics
  \endgraf
  Ghent University, Belgium
  \endgraf
and
\endgraf
  Departamento de Matematicas
  \endgraf
  Universidad del Valle
  \endgraf
  Cali-Colombia
   \endgraf
    {\it E-mail address} {\rm javier.henriquez@ugent.be},{\rm javier.henriquez@correounivalle.edu.co}}

\author[B. Rojas]{Brian Rojas}
\address{ Brian Rojas:
\endgraf
  Departamento de Matematicas
  \endgraf
  Universidad del Valle
  \endgraf
  Cali-Colombia
   \endgraf
    {\it E-mail address} {\rm javier.henriquez@correounivalle.edu.co}}

\subjclass{Primary 47A16, 47B33; Secondary 37D45}\makeatletter

\date{\today}

\keywords{Expansivity; Shadowing; Li-Yorke chaos; Composition operators}
\begin{document}

\begin{abstract}
In this note, we show that invertible composition operators on $S(\mathbb{R})$ are never generalized hyperbolic and, when the symbol has a fixed point, the corresponding operator fails to have the positive shadowing property. We then establish sufficient conditions on the symbol under which the associated composition operator is positively topologically expansive. In particular, we obtain a complete characterization for affine symbols and prove expansivity for broad classes of odd-degree polynomial symbols, while showing that polynomial symbols of even degree cannot generate topologically expansive operators. Our results reveal a strong connection between the dynamics of the symbol and the linear dynamics of the induced composition operator, and provide new examples and obstructions for topological expansivity in locally convex spaces.
\end{abstract} 

\maketitle
\markright{Dynamical Properties of Composition Operators on Schwartz Space}


\section{Introduction}

Among the natural classes of operators that act on $S(\mathbb{R})$, 
composition operators provide a particularly rich setting for studying these 
questions. Given a smooth symbol $\varphi:\mathbb{R}\to\mathbb{R}$, the associated 
composition operator is defined by
\[
C_\varphi f = f\circ\varphi,
\]
whenever $C_\varphi$ maps $S(\mathbb{R})$ continuously into itself if and only if the following conditions is hold:
\begin{enumerate}
        \item [C.1] For each $j\in\mathbb{N}$ there exist $C,p>0$ such that $$|\varphi^{j}(x)|\leq C \ (|1+|\varphi(x)|^{2}|)^{p}, \ \ \textrm{for all} \ \ x\in\mathbb{R}.$$ 
        
      \item[C.2] There exist $r>0$ such that $|\varphi(x)|\geq |x|^{1/r}, \ \ \textrm{for all} \ \ |x|\geq r.$    \end{enumerate}
See, for instance, \cite{galbis2018composition,asensio2025power}. 
The dynamics of $C_\varphi$ are closely related to those of the symbol $\varphi$: 
The orbits satisfy
\[
C_\varphi^n f = f\circ\varphi_n, \ \  \varphi_n=\underbrace{\varphi\circ\varphi\circ\cdots\circ\varphi}_{n\text{ times}}.
\] 
The main contribution of this work is to prove that the Schwartz space admits continuous composition operators that exhibit (positive) topological expansivity for several classes of symbols $\varphi.$ The simple more case is when $$\varphi(x)=ax+b, \ \ a\neq 0, \ \ b\in\mathbb R,$$ 
which complete characterization is possible with some consequence in the Li-Yorke chaos and topological transitivity. Within the polynomial setting, we show that no polynomial of even degree is topologically expansive by considering the orbits of compactly supported functions whose support lies outside the range of the symbol.\\

This behavior of polynomial of even degree is particularly revealing, since if we fix a symbol $\varphi:I\to I,$ for some compact interval $I$ then we can show (see Theorem \ref{133}) that a necessary and sufficient condition for $C_{\varphi}$ to fail to be positively topologically expansive is the following: for each $j$ there exist $C$ and $p$ such that

\[
|(\varphi_n)^j(x)| < C \bigl(1 + \varphi_n^2(x)\bigr)^p,
\]
for every $x \in I$ and for all $n \in \mathbb{N}.$ 
On the other hand, for polynomials of odd degree greater than one, the situation is more subtle. In this setting, we prove that if every point on the real line repels the symbol, that is $|\varphi'(x)|>1$ for all $x\in\mathbb R,$ then this condition is sufficient to generate an expansive orbit for each $0\neq f\in S(\mathbb R)$ (see Theorem \ref{132}). More generally, this condition can be relaxed when the symbol has no fixed points; In this direction, our results provide new insight into the notion of topological expansivity, recently introduced in \cite{bernardes2025generalized}, within the context of locally convex spaces.\\

On the other hand, when a symbol $\varphi$ has a fixed point, we exploit the structure of the seminorms to construct a $\delta$-pseudo-orbit that is not shadowed in $S(\mathbb{R})$ (see Theorem \ref{t1}). This reveals a notable phenomenon: a pathological behavior that arises not only in Banach spaces of analytic functions (see, for instance, \cite{Blois2025,blois2026shadowing,Alvarez2025,alvarez2025spectra}), but also extends to certain locally convex spaces. Regarding the generalized hyperbolicity property, we show that a decomposition of the Schwartz space for an invertible composition operator is impossible since it would necessarily reduce to a trivial decomposition (see Theorem \ref{hyperb} ). \\

Recent work has investigated several dynamical properties of composition 
operators in the Schwartz space. In particular, the dynamics and spectra of 
composition operators in $S(\mathbb{R})$ have been studied in 
\cite{FERNANDEZ20183503,FERNANDEZ2020107052}, where the authors showed that the Schwartz spaces do not support composition operators supercyclic or hypercyclic.In addition, they
provided a complete characterization of power-boundedness and mean ergodicity
in the same setting. The Schwartz space also supports other types of operators with interesting dynamical properties; see, for example, \cite{albanese2022spectra}. On the other hand, in the context of locally convex spaces, the dynamics of composition operators have recently been studied in several settings. For a more detailed account, see \cite{albanese2022dynamics,kalmes2022mean,FERNANDEZ20183503,galbis2018composition,albanese2023spectral,FERNANDEZ2020107052}.\\

This work is organized as follows: In section \ref{sec2} we introduced some notion of linear dynamics in Fréchet space. In section \ref{sec3} we studied the Shadowing and Generalized Hyperbolicity in the Schwartz space $S(\mathbb{R})$. Finally, in section \ref{sec4} we showed that the Schwartz space $S(\mathbb{R})$ supports composition operators with positively expansive.  

\section{Preliminaries
}\label{sec2}

Throughout this section, $\mathbb K$ denotes $\mathbb R$ or $\mathbb C$.
Let $X$ be a Hausdorff locally convex space over $K$, and let
$T\in L(X)$ be a continuous linear operator on $X$. We denote by
$GL(X)$ the set of all continuous linear operators on $X$ having a
continuous inverse. We assume that the topology of $X$ is induced by a
directed family of seminorms
\[
    \left(\|\cdot\|_{\alpha}\right)_{\alpha\in I}.
\]
Recall that a family of seminorms is directed if, for every
$\alpha,\beta\in I$, there exists $\gamma\in I$ such that
\[
    \|\cdot\|_{\alpha}\leq \|\cdot\|_{\gamma}
    \quad\text{and}\quad
    \|\cdot\|_{\beta}\leq \|\cdot\|_{\gamma}.
\]
 Let $p_n:=\|\cdot\|_{n},$ the metric
\[
d(x,y)
:=
\sum_{n=1}^{\infty}
2^{-n}\frac{p_n(x-y)}{1+p_n(x-y)},
 \ \ x,y\in X,
\]
is well defined and induces the original topology of $X.$ In particular, the topology generated by $d(\cdot,\cdot)$ coincides with the locally convex topology generated by the family $((p_n)_{n\in\mathbb N}).$ When $X=S(\mathbb R)$ we have the seminorms family $$\|f\|_n:=\max_{1\leq j \leq n} \  \sup_{x} \  (1+|x|^2)^n \ |f^{(j)}(x)|, \ \ f\in S(\mathbb R).$$

The following definitions will be used throughout this paper, see \cite{bernardes2025generalized} for more details.

\begin{definition}[Generalized hyperbolicity]
An operator $T\in L(X)$ is said to be \emph{generalized hyperbolic} if there
exists a topological direct sum decomposition
\[
    X=M\oplus N
\]
such that the following conditions hold:

\begin{enumerate}[label=\textnormal{(GH\arabic*)}]
    \item $T(M)\subseteq M$;

    \item $T(N)\supseteq N$ and the restriction
    \[
        T|_N:N\longrightarrow T(N)
    \]
    is an isomorphism;

    \item for every $\alpha\in I$, there exist $\beta\in I$, $c>0$,
    and $t\in(0,1)$ such that
    \[
        \|T^n y\|_{\alpha}
        \leq c\,t^n\|y\|_{\beta},
        \qquad y\in M,\quad n\in \mathbb N_0,
    \]
    and
    \[
        \|S^n z\|_{\alpha}
        \leq c\,t^n\|z\|_{\beta},
        \qquad z\in N,\quad n\in \mathbb N_0,
    \]
    where
    \[
        S=(T|_N)^{-1}:T(N)\longrightarrow N
    \]
    is restricted to $N$.
\end{enumerate}
\end{definition}

If, in addition, $T$ is invertible, condition
\textnormal{(GH2)} is equivalent to
\[
    T^{-1}(N)\subseteq N,
\]
and the second inequality in \textnormal{(GH3)} can be written as
\[
    \|T^{-n}z\|_{\alpha}
    \leq c\,t^n\|z\|_{\beta},
    \qquad z\in N,\quad n\in \mathbb N_0.
\]
If both $M$ and $N$ are $T$-invariant, then $T$ is called
\emph{hyperbolic}. Thus, generalized hyperbolicity extends the usual
hyperbolic splitting to the locally convex setting.\\

Let $U$ be a neighborhood of $0$ in $X$. A finite or infinite sequence
$(x_j)$ is called a \emph{$U$-pseudotrajectory} of $T$ if
\[
    Tx_j-x_{j+1}\in U
\]
for every pair of consecutive indices for which the expression is
defined. A finite $U$-pseudotrajectory
\[
    (x_j)_{j=0}^{k}
\]
is also called a \emph{$U$-chain}. If, in addition, $x_k=x_0$, it is
called a \emph{$U$-cycle}.

\begin{definition}[Shadowing property]
An operator $T\in GL(X)$ is said to have the \emph{shadowing property} if,
for every neighborhood $V$ of $0$ in $X$, there exists a neighborhood
$U$ of $0$ in $X$ such that every $U$-pseudotrajectory
$(x_j)_{j\in\mathbb Z}$ is $V$-shadowed by the trajectory of some
$x\in X$, that is,
\[
    x_j-T^j x\in V,
    \qquad j\in\mathbb Z.
\]
When $T\in L(X)$ is not invertible, one usually considers the corresponding
\emph{positive shadowing property}: every $U$-pseudotrajectory
$(x_j)_{j\in\mathbb N_0}$ is $V$-shadowed by some $x\in X$, namely
\[
    x_j-T^j x\in V,
    \qquad j\in\mathbb N_0.
\]
\end{definition}

For later reference, the finite version is defined analogously: $T$
has the \emph{finite shadowing property} if every sufficiently accurate
finite chain is shadowed, with the required accuracy depending only on
the prescribed neighborhood of $0$. The relationship between hyperbolicity and shadowing follows result (\cite[Theorem 6]{bernardes2025generalized})  

\begin{theorem}\label{prelim}
  Suppose that the topology of a locally convex space X is induced by a directed family $(\|\cdot\|_{\alpha})_{\alpha\in I}$ of complete seminorms. If $T\in L(X)$  is generalized hyperbolic  and the set $Ker(\|\cdot\|_{\alpha}):=\{f\in X: \|f\|_{\alpha}=0\}$ is $T$-invariant for all $\alpha\in I$, then $T$ has the positive shadowing property.
\end{theorem}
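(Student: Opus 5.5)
We prove Theorem \ref{prelim} in the standard way for generalized hyperbolic operators: construct the shadowing point explicitly by splitting the errors of a pseudotrajectory along $X=M\oplus N$ and summing geometrically decaying contributions. Let $P_M,P_N$ be the continuous projections associated with the decomposition. Given a neighborhood $V$ of $0$, we may assume $V=\{\|\cdot\|_\alpha<\varepsilon\}$ by directedness. By (GH3), choose $\beta$, $c$, $t$ for this $\alpha$. By continuity of the projections, choose $\gamma$ and $K$ with $\|P_M u\|_\beta+\|P_N u\|_\beta\le K\|u\|_\gamma$. Set $U=\{\|\cdot\|_\gamma<\delta\}$ with $\delta$ to be fixed. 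For a $U$-pseudotrajectory $(x_j)_{j\ge0}$, write $d_{j+1}=x_{j+1}-Tx_j$ (so $\|d_{j+1}\|_\gamma<\delta$), and split $d_{j}=m_j+n_j$ with $m_j\in M$ and $n_j\in N$.

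Unrolling gives $x_j=T^jx_0+\sum_{i=1}^j T^{j-i}d_i$. The $M$-components are stable forward, so we keep them as they are. The $N$-components are stable only backward, so we move them to the future via $S=(T|_N)^{-1}$. Concretely, define
\[
x = x_0-\sum_{i\ge1} S^{i}n_i .
\]
Each $S^i n_i\in N$, and $\|S^in_i\|_{\alpha'}\le c\,t^i\|n_i\|_{\beta'}$ for every seminorm. Hence the series is Cauchy for every seminorm, and completeness gives convergence in $X$.

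Now compute $x_j-T^jx$. For $i\le j$ we have $T^jS^in_i=T^{j-i}n_i$, since $T$ inverts $S$ on $N$ (here we use (GH2)). Therefore
\[
x_j-T^jx=\sum_{i=1}^j T^{j-i}m_i+\sum_{i>j} T^jS^in_i .
\]
For $i>j$, write $T^jS^in_i=S^{i-j}n_i$. This requires $S^{i-j}n_i\in N$ and $T$ acting as the inverse of $S$ along $S^kn_i$, which follows because $S$ maps $T(N)\supseteq N$ into $N$. Applying (GH3) then gives
\[
\|x_j-T^jx\|_\alpha\le \sum_{k\ge0}c\,t^k K\delta+\sum_{k\ge1}c\,t^kK\delta\le \frac{2cK\delta}{1-t}.
\]
Choosing $\delta<\varepsilon(1-t)/(2cK)$ finishes the estimate.

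The main obstacle is the bookkeeping with seminorms that are not norms. The estimate above needs $\|T^{k}m\|_\alpha\le c t^k\|m\|_\beta$ for every $m\in M$, and the kernel-invariance hypothesis is what makes the passage from $\gamma$-small errors to $\beta$-small components and their iterates consistent. In particular it ensures that elements which are $\gamma$-null produce $\alpha$-null contributions under $T^k$ and under the projections. The completeness of each seminorm is used for the convergence of $\sum_i S^in_i$. If $X$ is only complete in the $\alpha$-seminorm sense, we would instead work in the completion of $X/\ker\|\cdot\|_\alpha$, using the kernel-invariance to make $T$ descend to that quotient.
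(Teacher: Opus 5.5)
There is a genuine gap: the shadowing point you define need not exist, because $\sum_{i\ge1}S^in_i$ need not converge in $X$.

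\textbf{Why the series can diverge.} A $U$-pseudotrajectory is small only in the single seminorm $\|\cdot\|_\gamma$. So the only control on the $N$-components is $\|n_i\|_\beta\le K\delta$ for the one $\beta$ attached to $\alpha$. For any other $\beta'$ the numbers $\|n_i\|_{\beta'}$ can grow arbitrarily fast, and then the inequality $\|S^in_i\|_{\alpha'}\le c\,t^i\|n_i\|_{\beta'}$ gives nothing.

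Concretely, take $X=\mathbb K^{\mathbb N}$ with $\|x\|_n=\max_{k\le n}|x_k|$, a directed family of complete seminorms. Then $T=2I$ is generalized hyperbolic with $M=\{0\}$, $N=X$, $S=\frac12 I$, and every kernel is invariant. The sequence $x_0=0$, $x_{j+1}=2x_j+4^{j+1}e_{\gamma+1}$ satisfies $\|x_{j+1}-Tx_j\|_\gamma=0$. Yet $S^in_i=2^ie_{\gamma+1}$, so your $x$ does not exist.

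\textbf{Further problems with the argument as written.}
\begin{itemize}
\item Completeness of each seminorm does not make $X$ complete. Finitely supported sequences with the same seminorms are a counterexample. So even a seminorm-wise Cauchy series need not have a limit in $X$.
\item Your argument never actually uses kernel invariance, and the closing paragraph misplaces its role. (GH3) and continuity of the projections already handle the passage from $\gamma$ to $\beta$. Also, $T$ descending to $X/\ker\|\cdot\|_\alpha$ does not make it $\|\cdot\|_\alpha$-bounded: think of differentiation on $S(\mathbb R)$. So $T$ does not extend to a completion, and in any case the shadowing point must lie in $X$.
\item There is a sign slip: the candidate must be $x_0+\sum_iS^in_i$. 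Otherwise the terms $\sum_{i\le j}T^{j-i}n_i$ double instead of cancelling.
\end{itemize}

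\textbf{How to repair it.} Build the tail sums in a single seminorm, and pass only one application of $T$ through the limit.
\begin{itemize}
\item Fix $\alpha$ and pick $\alpha'$ dominating $\alpha$ with $\|Tu\|_\alpha\le C\|u\|_{\alpha'}$. Let $\beta,\beta'$ be the (GH3) partners of $\alpha,\alpha'$. Choose $\gamma$ and $K$ with $\|P_Mu\|_\beta+\|P_Nu\|_{\beta'}\le K\|u\|_\gamma$.
\item Set $\sigma_{j,k}=\sum_{i=j+1}^kS^{i-j}n_i$. This is $\|\cdot\|_{\alpha'}$-Cauchy in $k$. Completeness of $\|\cdot\|_{\alpha'}$ gives $z_j\in X$ with $\|\sigma_{j,k}-z_j\|_{\alpha'}\to0$ and $\|z_j\|_{\alpha'}\le c't'K\delta/(1-t')$.
\item Letting $k\to\infty$ in $T\sigma_{j,k}=n_{j+1}+\sigma_{j+1,k}$, using $\|T(\cdot)\|_\alpha\le C\|\cdot\|_{\alpha'}$, gives $Tz_j-n_{j+1}-z_{j+1}\in\ker\|\cdot\|_\alpha$.
\item Put $y_j=\sum_{i\le j}T^{j-i}m_i$ and $u_j=x_j-y_j+z_j$. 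Then $u_{j+1}-Tu_j\in\ker\|\cdot\|_\alpha$.
\item Kernel invariance now propagates this through all iterates: $u_j-T^ju_0=\sum_{i=1}^jT^{j-i}(u_i-Tu_{i-1})\in\ker\|\cdot\|_\alpha$.
\item Hence $x=u_0=x_0+z_0$ satisfies $\|x_j-T^jx\|_\alpha\le\|y_j\|_\alpha+\|z_j\|_{\alpha'}\le cK\delta/(1-t)+c't'K\delta/(1-t')$ for every $j$. This is what is needed once $\delta$ is small.
\end{itemize}
This is exactly where the two extra hypotheses of the statement are used: completeness of the individual seminorms and invariance of their kernels.
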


For operators on normed spaces, expansivity is usually expressed in
terms of the unboundedness of the orbit of every nonzero vector. In
general, locally convex spaces, the seminorm-based formulation is more
appropriate (see \cite[Definition 32]{bernardes2025generalized}).

\begin{definition}[Topological expansivity]
Let $T\in GL(X)$. We say that $T$ is \emph{topologically expansive} if,
for every nonzero $x\in X$, there exists $\alpha\in I$ such that
\[
    \sup_{n\in\mathbb Z}\|T^n x\|_{\alpha}=\infty.
\]
Equivalently, the orbit
\[
    \operatorname{Orb}(x,T)
    :=\{T^n x:n\in\mathbb Z\}
\]
is topologically unbounded for every $x\neq0$. When $\mathbb Z$ is replaced by $\mathbb N$ we say that $T$ is positively topologically expansive.
\end{definition}

\begin{remark}
If $X$ is a normed space, the preceding condition reduces to
\[
    \sup_{n\in\mathbb Z}\|T^n x\|=\infty
    \qquad\text{for every }x\neq0.
\]
For metrizable locally convex spaces, topological expansivity is related
to, but need not be equivalent to, the usual metric notion of
expansivity.
\end{remark}

\begin{definition}[Li--Yorke pair]
Let $T\in L(X)$. Two distinct points $x,y\in X$ form a
\emph{Li--Yorke pair} if the following conditions hold:

\begin{enumerate}[label=\textnormal{(LY\arabic*)}]
    \item for every neighborhood $V$ of $0$ in $X$, there exists
    $n\in\mathbb N$ such that
    \[
        T^n x-T^n y\in V;
    \]

    \item there exists a neighborhood $U$ of $0$ in $X$ such that
    \[
        T^n x-T^n y\notin U
    \]
    for infinitely many $n\in\mathbb N$.
\end{enumerate}
\end{definition}

\begin{definition}[Li--Yorke chaos]
The operator $T$ is said to be \emph{Li--Yorke chaotic} if there exists
an uncountable set $S\subseteq X$ such that every pair of distinct
points in $S$ is a Li--Yorke pair.
\end{definition}

When $X$ is a metric space with metric $d$, the preceding definition
takes the familiar form
\[
    \liminf_{n\to\infty}d(T^n x,T^n y)=0,
    \qquad
    \limsup_{n\to\infty}d(T^n x,T^n y)>0.
\]
In the setting of continuous linear operators on Fr\'echet spaces,
Li--Yorke chaos is also characterized by the existence of a
semi-irregular vector; that is, a vector whose orbit does not converge
to zero but has a subsequence converging to zero.

\begin{definition}[Topological transitivity]
An operator $T\in L(X)$ is said to be \emph{topologically transitive} if,
for every pair of nonempty open subsets $A,B\subseteq X$, there exists
$n\in\mathbb N_0$ such that
\[
    T^n(A)\cap B\neq\varnothing.
\]
\end{definition}

More generally, $T$ is called \emph{topologically mixing} if, for every
pair of nonempty open subsets $A,B\subseteq X$, there exists
$n_0\in\mathbb{N}_0$ such that
\[
    T^n(A)\cap B\neq\varnothing
    \qquad\text{for every }n\geq n_0.
\]
Thus, topological mixing implies topological transitivity. One relationship between Expansivity and Li-Yorke chaos is the following (see \cite[Theorem 38]{bernardes2025generalized}).

\begin{theorem}\label{prely2} A uniformly topologically expansive operator on a locally convex space is neither Li-Yorke chaotic nor topologically transitive.
\end{theorem}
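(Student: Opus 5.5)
The plan is to reduce both assertions to one structural consequence of \emph{uniform} topological expansivity (in the sense of \cite[Definition 32]{bernardes2025generalized}): a quantitative dichotomy for the orbits of nonzero vectors. Concretely, I would first show the following. For each seminorm $\|\cdot\|_\alpha$ there are a seminorm $\|\cdot\|_\beta$ and an $N\in\mathbb N$ such that every $x$ with $\|x\|_\beta=1$ satisfies $\|T^{N}x\|_\alpha\ge 2$ or $\|T^{-N}x\|_\alpha\ge 2$. This comes straight from the uniformity: the divergence of the orbits happens at a rate independent of the vector on the $\beta$-unit sphere. Iterating and using linearity (rescaling), I would then derive a ``cone'' statement. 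Every nonzero $x$ falls into exactly one of two regimes:
\begin{itemize}
\item in the first, $\|T^{n}x\|_\alpha\to\infty$ along the \emph{whole} sequence $n\to\infty$ for a suitable $\alpha$;
\item in the second, the forward orbit $T^n x$ tends to $0$ along the \emph{whole} sequence.
\end{itemize}
The passage to the second regime is where the backward growth $\|T^{-n}y\|_\alpha\to\infty$ with uniform constants is translated into forward decay $\|T^n x\|_\alpha\le c\,2^{-n/N}$.

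\emph{Li--Yorke chaos.} On a Fréchet space, Li--Yorke chaos is equivalent to the existence of a semi-irregular vector; in a general locally convex space I would use a Li--Yorke pair $x,y$ and work with $z=x-y\neq 0$. Condition (LY1) says that $T^{n_k}z\to 0$ along some subsequence. Condition (LY2) says that $T^nz\not\to 0$. The second fact excludes the decaying regime for $z$. So $z$ is in the first regime, and $\|T^nz\|_\alpha\to\infty$ along all $n$. This contradicts $T^{n_k}z\to0$, since along that subsequence $\|T^{n_k}z\|_\alpha\to 0$. Hence no Li--Yorke pair exists at all, and in particular $T$ is not Li--Yorke chaotic.

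\emph{Topological transitivity.} Fix $\alpha$ and the corresponding $\beta,N$ from the dichotomy. Take $y$ with $\|y\|_\alpha=1$, and set
\[
A=\{x:\|x-y\|_\beta<\varepsilon\},\qquad B=\{x:\|x-y\|_\gamma<\varepsilon\},
\]
where $\gamma\geq\alpha,\beta$ comes from directedness. Transitivity yields $x\in A$ and some $n$ with $T^nx\in B$, and in fact infinitely many such $n$, since $T^{m}(A)\cap B\neq\varnothing$ can be iterated by replacing $A$ with $A\cap T^{-n}(B)$. With $\varepsilon$ small, both $x$ and $T^nx$ have $\alpha$-size between $1/2$ and $2$. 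The uniform dichotomy, applied to $x$ and to $T^nx$, forces the orbit to leave the $\alpha$-annulus monotonically at a geometric rate in one time direction. I would use this to show that a return after more than roughly $N\log_2(8)$ steps is impossible, and so obtain the contradiction.

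The main obstacle is the first step: extracting a uniform, seminorm-compatible dichotomy with constants $(\beta,N)$ from the definition. In the normed case this is the known splitting into forward- and backward-expanding parts. In the locally convex case one must track the loss $\alpha\to\beta$ under iteration. I would try to handle it with the directedness of the seminorm family, choosing a single $\beta$ dominating the finitely many seminorms that appear in $N$ iterates, via the continuity estimates $\|Tx\|_\alpha\le C\|x\|_{\alpha'}$.
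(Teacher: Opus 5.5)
The paper does not prove this statement; it only quotes \cite[Theorem 38]{bernardes2025generalized}, so your argument has to stand on its own.

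The Li--Yorke half works once your dichotomy is available. In a non-metrizable space you cannot always extract a subsequence from (LY1), but you do not need one. If $\|T^nz\|_\gamma\to\infty$, only finitely many iterates have small $\gamma$-seminorm; they are nonzero, so a smaller neighbourhood excludes them and (LY1) fails. For the dichotomy itself, the block-by-block iteration works because the normalizing seminorm dominates the one in which growth is measured. If $\|\cdot\|_\alpha\le\|\cdot\|_\beta$ and $\|T^{(k+1)N}x\|_\alpha\ge 2\|T^{kN}x\|_\beta$, then the backward alternative at block $k+1$ is impossible, so forward doubling persists forever. No loss accumulates over iterates. However, whether the definition in \cite{bernardes2025generalized} actually delivers such a $\beta\ge\alpha$ is precisely the step you leave open.

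The transitivity half has a genuine gap. From $x\in A$ and $T^nx\in B$ you retain only that both lie in the $\alpha$-annulus, and that cannot give a contradiction. Take $T(u,v)=(2u,v/2)$ on $\mathbb{K}^2$ with the max norm: it satisfies your uniform alternative with $N=1$, yet $x=(2^{-m},1)$ and $T^mx=(1,2^{-m})$ both have norm $1$ for every $m$. Your dichotomy is only asymptotic. An orbit that eventually blows up may first shrink by an arbitrary factor and then grow back, so the claimed bound of about $3N$ on return times is false.

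The missing idea is to use that both ends are close to the \emph{same} point $y$, by applying the uniform alternative at $y$ itself. Suppose $\|T^Ny\|_\alpha\ge 2\|y\|_\beta$. Choose the seminorm defining $A$ large enough to control $T^N$, and $\varepsilon$ small; then $\|T^Nx\|_\alpha\ge\tfrac32\|x\|_\beta$. The propagation above then makes $\|T^{kN}x\|_\alpha$ grow geometrically. On the other hand, write $n=qN+r$ with $0\le r<N$. The vector $T^{qN}x=T^{-r}T^nx$ stays $\alpha$-bounded, because $T^nx\in B$ and $B$ can be defined by a seminorm controlling $T^{-r}$. This is a contradiction for $q$ large. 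If instead $\|T^{-N}y\|_\alpha\ge 2\|y\|_\beta$, run the same argument backward from $T^nx$ to contradict $x\in A$.

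A smaller point: large return times come from applying transitivity to $A$ and $T^{-k}(B)$. Applying it to $A\cap T^{-n}(B)$ and $B$, as you propose, may just return $n$ again, or $0$.
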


\section{Shadowing and Generalized hyperbolic for \texorpdfstring{$C_{\varphi}$}{C varphi}}\label{sec3}

In this section, we analyze the notion of shadowing and its implications for generalized hyperbolicity. 
\begin{theorem}\label{t1}
Let $\varphi$ be a symbol. If there exists $x_0\in\mathbb{R}$ such that $\varphi(x_0) = x_0$ then  $C_{\varphi}$ does not have the positive shadowing property on $S(\mathbb{R}).$
\end{theorem}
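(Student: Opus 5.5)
The plan is to exploit the fact that at a fixed point $x_0$ every orbit evaluates to a constant: $(C_\varphi^n f)(x_0)=f(\varphi_n(x_0))=f(x_0)$ for all $n$. A pseudotrajectory, however, can accumulate small errors at $x_0$ and drift to large values there. No true orbit can follow such drift, so a suitable pseudotrajectory cannot be shadowed.

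\textbf{Step 1: choosing the neighbourhoods.} We argue by contradiction and assume that $C_\varphi$ has the positive shadowing property. Take the neighbourhood $V=\{f:\|f\|_1<1/2\}$ and let $U$ be the neighbourhood of $0$ given by the definition. Since the seminorms are directed and define the topology, $U$ contains a set $\{g:\|g\|_m<\varepsilon\}$ for some $m\in\mathbb N$ and $\varepsilon>0$. With the seminorms as written (derivatives of order $1\le j\le n$), $\|\cdot\|_1$ controls $\sup|f'|$, not $\sup|f|$. So the accumulated drift should be measured in the derivative at $x_0$. We therefore use $(C_\varphi^n f)'(x_0)=f'(x_0)\,\varphi'(x_0)^n$.

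\textbf{Step 2: the pseudotrajectory.} Fix $h\in S(\mathbb R)$ with $h'(x_0)=1$ and $\|h\|_m<\varepsilon$, obtained by scaling a bump function. Define $x_0:=0$ and $x_{j+1}:=C_\varphi x_j+h$, so that $C_\varphi x_j-x_{j+1}=-h\in U$. Then
\[
x_j=\sum_{k=0}^{j-1} C_\varphi^k h, \qquad x_j'(x_0)=\sum_{k=0}^{j-1}\lambda^k, \qquad \lambda:=\varphi'(x_0).
\]

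\textbf{Step 3: the case analysis on $\lambda$.}

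\emph{Case $|\lambda|\le 1$.} If $\lambda=1$, then $x_j'(x_0)=j$. If the orbit of $x$ shadowed $(x_j)$, we would have $|j-(C_\varphi^j x)'(x_0)|\le\|x_j-C_\varphi^j x\|_1<1/2$, while $(C_\varphi^j x)'(x_0)=x'(x_0)$ is constant in $j$. This is a contradiction for large $j$. For $|\lambda|<1$ the partial sums $x_j'(x_0)$ converge, so the derivative drift is too weak. Instead I would use the $\sup$ over points near $x_0$, or higher derivatives $(C_\varphi^n f)^{(k)}(x_0)$, whose leading term is $f^{(k)}(x_0)\lambda^{kn}$ plus lower-order terms. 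Alternatively, I would pick $h$ with $h'(x_0)=0$ but with $h$ nonzero near $x_0$ at points whose $\varphi$-orbits converge to $x_0$, so that the drift accumulates at those points.

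\emph{Case $|\lambda|>1$.} Here I would run the pseudotrajectory backwards instead. Choose the errors $h_j$ so that the target values are forced to be $x_j'(x_0)=c$ for every $j$, while an exact orbit has derivative $x'(x_0)\lambda^j$. Matching $c$ for all $j$ within $1/2$ forces $x'(x_0)\lambda^j$ to stay bounded, hence $x'(x_0)=0$. This would be contradicted by the constraint at $j=0$ if $c=1$. Concretely, take $x_j:=g$ fixed with $g'(x_0)=1$. Then $C_\varphi g-g\in U$ is needed, and this requires $g$ to be nearly $C_\varphi$-invariant in $\|\cdot\|_m$. That is not automatic, so a correction of the form $x_j=\sum_{k<j}\lambda^{-(j-k)}C_\varphi^k h$ is more natural.

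\textbf{Main obstacle.} The cleanest route avoids the case split by evaluating $f$ itself rather than $f'$: $(C_\varphi^n f)(x_0)=f(x_0)$ exactly, and the pseudotrajectory $x_j=\sum_{k<j}C_\varphi^k h$ with $h(x_0)=1$ gives $x_j(x_0)=j$. Getting a contradiction then needs a seminorm that controls point evaluation. Since $\|f\|_1\ge\sup|f'|$ and $f$ vanishes at infinity, we have $|f(x_0)|\le\int_{x_0}^\infty|f'|\le\|f\|_1\int(1+t^2)^{-1}\,dt=\pi\|f\|_1$. Point evaluation is therefore $\|\cdot\|_1$-continuous, and taking $V=\{\|f\|_1<1/(2\pi)\}$ yields $|j-x(x_0)|<1/2$ for all $j$, which is impossible. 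This makes Step 3 unnecessary.

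The remaining delicate point, which I expect to be the real obstacle, is ensuring $h\in U$ for an arbitrary $U$. This holds because we may choose $h$ with $h(x_0)=1$ and $\|h\|_m$ arbitrarily small: take a wide, flat bump $h(x)=\psi((x-x_0)/R)$ and let $R\to\infty$, so that the derivatives shrink. The weights $(1+|x|^2)^m$ grow on the wide support, though, so the estimate fails as stated and must be checked. If it fails, I would scale the bump so that $h(x_0)=\eta$ is small, with $\|h\|_m<\varepsilon$. The drift $x_j(x_0)=j\eta$ still tends to infinity, and the same contradiction follows.
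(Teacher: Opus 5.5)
Your final argument (the ``Main obstacle'' paragraph, with $h(x_0)=\eta\neq 0$) is correct, and it follows a genuinely different route from the paper's proof.

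The paper uses the same kind of pseudotrajectory, $f_{n,\delta}=\frac{\delta}{d(f,0)}\sum_{j\le n}f\circ\varphi_j$. However, it tests this against the functional $g\mapsto g'(x_0)$, which $C_\varphi$ multiplies by $\lambda=\varphi'(x_0)$, and this is what forces the case split on $\lambda$. You test against $g\mapsto g(x_0)$, which is exactly $C_\varphi$-invariant. So the pseudotrajectory drifts linearly, $x_j(x_0)=j\eta$, while every true orbit is constant at $x_0$. Your estimate $|g(x_0)|\le\int_{x_0}^\infty|g'|\le\pi\|g\|_1$ is exactly what is needed to see this drift in the paper's seminorms, which contain no zero-order term.

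This buys more than brevity. The derivative functional produces unbounded drift only when $\lambda=1$. When $|\lambda|\neq 1$, take a candidate shadowing point whose derivative at $x_0$ cancels the $\lambda^n$-term. The remaining discrepancy at $x_0$ is then of order $\delta$, not bounded below by $\epsilon_0$; the paper's lower bound loses the factor $f'(x_0)$. The case $\lambda=-1$ is not addressed there at all. Your invariant functional handles every value of $\varphi'(x_0)$ at once.

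When you write it up, discard Step 3 and the wide-bump idea. The normalizations $h'(x_0)=1$ or $h(x_0)=1$ are incompatible with $\|h\|_m<\varepsilon$ for small $\varepsilon$, by the very estimate you proved. Nothing needs them, though: $U$ is absorbing, so $h=t h_0$ with $h_0(x_0)=1$ and small $t>0$ lies in $U$, giving $\eta=t\neq 0$. Also rename the pseudotrajectory, since $x_0$ currently denotes both the fixed point and the initial term.
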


\begin{proof}
The analysis depends essentially of value $\varphi'(x_0).$ First, let $|\varphi'(x_0)| > 1;$ Take $f \in S(\mathbb{R})$ such that $f'(x_0)\neq 0,$ $0<d(f,0)<\delta$ and fixed $\epsilon_0 \in (0,\frac{1}{|\varphi'(x_0)-1|}).$ For each $\delta > 0,$ consider the $\delta$-pseudo-trajectory $f_{n,\delta}$ given by
\begin{equation}\label{seq1}
    f_{n,\delta}(x):=\displaystyle\frac{\delta}{d(f,0)}\sum_{j=0}^{n} (f\circ\varphi_{j})(x), \ \ x\in\mathbb{R}.
\end{equation}
 Hence, for any $h \in S(\mathbb{R})$ we have 

\begin{equation*}
\begin{array}{ll}
\|C^n_{\varphi}h-f_{n,\delta}\|_{\pi_k} &\geq |(C^n_{\varphi}h)'(x_0)-f'_{n,\delta}(x_0)| \\
& \geq \left|h'(x_0)(\varphi'(x_0))^n-\frac{\delta}{d(f,0)}f'(x_0)\displaystyle\frac{(\varphi'(x_0))^n-1}{(\varphi'(x_0))-1}\right|\\
& \geq \left||(\varphi'(x_0))^n(h'(x_0)-\displaystyle\frac{\delta f'(x_0)}{d(f,0)(\varphi'(x_0)-1)})|-|\frac{\delta}{d(f,0)(\varphi'(x_0)-1)}| \right|
\end{array}
\end{equation*}
In the case $$h'(x_0)-\frac{\delta f'(x_0)}{d(f,0)(\varphi'(x_0)-1)}=0,$$
it follows $\|C^n_{\varphi}h-f_{n,\delta}\|_k\geq \epsilon_0$ for any $n\in\mathbb{N} $ and any $k\in\mathbb{N}.$ In the other case, note that $|(\varphi'(x))^{n}|\to+\infty $ as $n\to+\infty$ implies $$\displaystyle\liminf_{n\to+\infty}\|C^n_{\varphi}h-f_{n,\delta}\|_k=+\infty, \ \  \textrm{for each}  \ \ k\in\mathbb{N}.$$
In any case, 
\begin{eqnarray*}
d(C^{n}_{\varphi}h,f_{n,\delta})&=& \sum_{k=0}^{+\infty} \frac{1}{2^k} \ \frac{\|(C^{n}_{\varphi}h-f_{n,\delta}\|_k}{1+\|(C^{n}_{\varphi}h-f_{n,\delta}\|_k} \\
   &\geq& \frac{\epsilon_0}{1+\epsilon_0} \sum_{k=0}^{+\infty} \frac{1}{2^{k}} \\
   &=& \frac{2\epsilon_0}{1+\epsilon_0}, 
\end{eqnarray*}
 for all $n\in\mathbb{N}$ thus $C_\varphi$  does not have the positive shadowing property on $S(\mathbb{R}).$ For the case $ |\varphi'(x_0)| < 1$ note that since $|(\varphi'(x_0))^{n}|\to0$ as $n\to+\infty$ the analysis is exactly same. Finally, let \(\varphi'(x_0) = 1\); the following inequality is obtained
$$\|C^n_{\varphi}h-f_{n,\delta}\|_k \geq |(C^n_{\varphi}h)'(x_0)-f'_{n,\delta}(x_0)| \\
 \geq \left|h'(x_0)-\frac{\delta}{\delta(f,0)}f'(x_0)n\right|, \ \ n\in\mathbb{N}.$$
Since $f'(x_0)\neq 0$ for each $k\in\mathbb{N}$ we get $$\liminf_{n\to+\infty}\|C^n_{\varphi}h-f_{n,\delta}\|_k=+\infty.$$ This shows that $C_{\varphi}$ also does not have the positive shadowing property on $S(\mathbb R).$ 
\end{proof}

\begin{remark}\label{r1*} It is easy to show that $C_{\varphi}$ has a $C_{\varphi}$-invariant kernel for all indices. Hence, if the symbol $\varphi$ has fixed point $x_0$ then by Theorem \ref{prelim} the operator $C_\varphi$ does not generalized hyperbolic on $S(\mathbb R).$
\end{remark}

\begin{theorem}\label{t2}
Let $\varphi:\mathbb{R}\to\mathbb{R}$ be a polynomial of even degree. Then $C_\varphi$ does not have the positive shadowing property on $S(\mathbb R).$
\end{theorem}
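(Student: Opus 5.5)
The approach is to reduce the statement to Theorem \ref{t1}: we show that every polynomial $\varphi$ of even degree has a real fixed point $x_0$, after which Theorem \ref{t1} immediately gives the failure of positive shadowing for $C_\varphi$ on $S(\mathbb R)$. Implicitly, $\varphi$ is assumed to be a symbol, i.e. it satisfies C.1 and C.2, so that $C_\varphi$ is a continuous operator on $S(\mathbb R)$ and Theorem \ref{t1} applies. For polynomials, C.1 is automatic. C.2 holds because $|\varphi(x)|\sim |a||x|^{d}$ with $d\geq 2$ even, so $|\varphi(x)|\geq |x|^{1/r}$ for $|x|$ large.

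The key step is the existence of a fixed point. Write $\varphi(x)=a x^{d}+\dots$ with $d$ even, $d\geq 2$, and $a\neq 0$, and set $g(x)=\varphi(x)-x$. Since $d\geq 2$, $g$ has the same leading term $a x^d$. Because $d$ is even, $g(x)\to \operatorname{sgn}(a)\cdot\infty$ as $x\to\pm\infty$, so $g$ has the same sign at both ends. The intermediate value theorem therefore does \emph{not} directly give a zero, and this is the main obstacle: for example, $\varphi(x)=x^2+1$ has no real fixed point, since $x^2-x+1>0$. So the simple reduction fails in general, and a different argument is needed in the fixed-point-free case.

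In that case I would imitate the proof of Theorem \ref{t1} at a suitable point rather than at a fixed point. Since $\varphi$ has even degree, it is bounded below (say $a>0$), so its range $\varphi(\mathbb R)=[m,\infty)$ misses $(-\infty,m)$. Choose a nonzero $f\in S(\mathbb R)$ supported in $(-\infty,m)$, with $f'(y_0)\neq 0$ at some $y_0<m$. Then $C_\varphi^j f=f\circ\varphi_j=0$ for every $j\geq 1$. Now take the $\delta$-pseudo-trajectory $f_0=0$ and $f_n=\lambda f$ for $n\geq 1$, with $\lambda$ small so that $d(\lambda f,0)<\delta$. Then $C_\varphi f_n-f_{n+1}=\lambda(C_\varphi f-f)=-\lambda f$, which is small, and $C_\varphi f_0-f_1=-\lambda f$ is also small.

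Any shadowing vector $h$ would need $C_\varphi^n h=h\circ\varphi_n$ close to $\lambda f$ for all $n\geq1$. But $h\circ\varphi_n$ vanishes nowhere in particular; evaluating at $y_0$ gives $(h\circ\varphi_n)'(y_0)-\lambda f'(y_0)$. The cleaner route is to evaluate $h\circ\varphi_n$ where $\lambda f$ is large: $h\circ\varphi_n(y_0)=h(\varphi_n(y_0))$. Since $\varphi_n(y_0)$ lies in $[m,\infty)$, and in the fixed-point-free case with $a>0$ we have $\varphi(x)>x$, so $\varphi_n(y_0)\to+\infty$. Hence $h(\varphi_n(y_0))\to 0$, while $\lambda f(y_0)$ can be chosen nonzero. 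This yields $\|C_\varphi^n h-f_n\|_k\geq |\lambda f(y_0)|/2$ for large $n$ and every $k$, so, exactly as in the metric estimate of Theorem \ref{t1}, the pseudo-trajectory is not $\epsilon$-shadowed for a fixed small $\epsilon$ independent of $\delta$. The scaling must be handled with care: $\epsilon$ has to be fixed before $\delta$, and the computation in Theorem \ref{t1} shows that a lower bound on one seminorm suffices. The case $a<0$ is symmetric, using $\varphi(x)<x$ and $\varphi_n(y_0)\to-\infty$.

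Combining the two cases, the fixed-point case via Theorem \ref{t1} and the fixed-point-free case via this support-outside-the-range construction, proves the theorem.
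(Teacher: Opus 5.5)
\textbf{The fixed-point-free case fails.} Your fixed-point case is the same as the paper's (a direct appeal to Theorem \ref{t1}), and you correctly see that it cannot cover everything, since $x^2+1$ has no real fixed point. Your argument for the remaining case does not work, however.

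Your pseudo-trajectory $f_0=0$, $f_n=\lambda f$ ($n\ge 1$) has every error equal to $-\lambda f$. So it is a $U$-pseudo-trajectory exactly when $\lambda f\in U$ (for balanced $U$). But then the zero orbit already $V$-shadows it, because $f_n-C_\varphi^n0\in\{0,\lambda f\}\subseteq U\subseteq V$. Your lower bound $|\lambda f(y_0)|/2$ is proportional to $\lambda$, and $\lambda$ is forced to $0$ as $\delta\to 0$. Hence no $\epsilon$ independent of $\delta$ is ever produced. The scaling issue you mention at the end is exactly where the proof collapses, and it cannot be fixed within this construction.

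\textbf{The paper's route has the same flaw, and the case is false.} The paper argues differently in this case. It uses the telescoping pseudo-trajectory $\lambda(C_\varphi^ng-g)$, which tends to $-\lambda g$ while every true orbit tends to $0$ (by the Fern\'andez--Galbis--Jord\'a result). It then tries to keep $\lambda(C_\varphi g-g)$ small while $\lambda g$ is large. That final step has a defect of the same kind as yours: $d$ is not homogeneous, so $\lambda=\delta_0/d(f,0)$ does not make the error $\delta_0$-small.

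In fact no argument can succeed, because this case of the statement is false. Take $\varphi(x)=x^2+1$.
\begin{itemize}
\item For $n\ge1$, $\varphi_n$ is a polynomial in $x^2$ of degree $2^n$ with nonnegative coefficients. Hence $|\varphi_n^{(l)}(x)|\le 2^{nl}\max(\varphi_n(x),\varphi_n(1))\le 2^{nl+1}\varphi_n(x)^3$.
\item Moreover $\varphi_n(x)\ge\max(|x|,\varphi_n(0))$, and $\varphi_n(0)\ge 2^{2^{n-2}}$ for $n\ge2$.
\item Fa\`a di Bruno's formula then gives $\|C_\varphi^nf\|_m\le C_m2^{nm^2}\varphi_n(0)^{-m}\|f\|_{3m}\le c\,t^n\|f\|_{3m}$ for any fixed $t\in(0,1)$.
\item Since $x_n-C_\varphi^nx_0=\sum_{k=1}^nC_\varphi^{n-k}(x_k-C_\varphi x_{k-1})$, every pseudo-trajectory is shadowed by its own initial point, with $\|x_n-C_\varphi^nx_0\|_m\le\frac{c}{1-t}\sup_k\|x_k-C_\varphi x_{k-1}\|_{3m}$.
\end{itemize}
Thus this $C_\varphi$ has the positive shadowing property. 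It is even generalized hyperbolic with $M=S(\mathbb R)$ and $N=\{0\}$, so Theorem \ref{prelim} applies as well. The same estimate gives $\|g\|_m\le\frac{c}{1-t}\|C_\varphi g-g\|_{3m}$, which is exactly why the paper's final step must fail. The missing step in the fixed-point-free case, in your proposal and in the paper alike, cannot be supplied.
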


\begin{proof} If the symbol $\varphi$ has a fixed point then  $C_{\varphi}$  does not possess the positive shadowing property, by Theorem \ref{t1}. Hence, it is sufficient to analyze even-degree polynomial symbols without fixed points. Indeed, first we construct a $\delta$-pseudotrajectory that will be employed throughout the proof.
Let $0\neq g\in S(\mathbb{R})$ and suppose that
\[
C_{\varphi}g-g=f,
\]
for some $0\neq f\in S(\mathbb{R})$. By induction,
\[
g=C_{\varphi}^{\,n}g-\sum_{k=0}^{n-1}C_{\varphi}^{\,k}f,
\qquad n\geq 1.
\]
Multiplying both sides by $\delta_0/d(f,0)$ for $\delta_0>0$, we have
\[
\frac{\delta_0}{d(f,0)}\,g
=
\frac{\delta_0}{d(f,0)}\,C_{\varphi}^{\,n}g-f_n, \ \ n\geq 1
\]
where
\[
f_n:=
\frac{\delta_0}{d(f,0)}
\sum_{k=0}^{n-1}C_{\varphi}^{\,k}f, \ \ n\geq 1.
\]
Assume that $C_{\varphi}$ possesses the positive shadowing property. Let $\varepsilon\in(0,1);$ take $\delta>0$ such that for every $\delta$-pseudotrajectory $(f_n)_{n\geq 0}$, there exists $0\neq h\in S(\mathbb{R})$ satisfying $d(C_{\varphi}^n h,f_n)<\varepsilon, \ \ n\in\mathbb N$ which implies that 
\[
\|C_{\varphi}^{\,n}h-f_n\|_{1}<\varepsilon,
\qquad n\geq 0.
\]

Applying the shadowing property to the $\delta$-pseudotrajectory with $\delta=\delta_0$ constructed above, we obtain
\[
\left\|
C_{\varphi}^{\,n}h-
\left(
\frac{\delta_0}{d(f,0)}
(C_{\varphi}^{\,n}g-g)
\right)
\right\|_{1}
<\varepsilon,
\qquad n\geq 0.
\]

Since $\varphi$ is a polynomial of even degree we have  $C_{\varphi_n}h, C_{\varphi_n}g\to 0$ in $S(\mathbb{R})$ (see
\cite{FERNANDEZ20183503}). Hence, 
\[
\left\|
\frac{\delta_0}{d(f,0)}\,g
\right\|_{1}
\leq \varepsilon.
\]
Since $\|g\|_{1}\neq 0$, it follows that
\[
\|g\|_{1}
\leq \frac{d(C_{\varphi}g,g)}{\delta_0}\varepsilon\le \frac{\varepsilon}{\delta_0},
\]

Since this estimate must hold for every
$g\in S(\mathbb{R})$, letting
$\|g\|_{1}\to\infty$ leads to a contradiction.
Therefore, $C_{\varphi}$ cannot possess the shadowing property.
\end{proof}

\begin{theorem}\label{hyperb}
Let $\varphi$ be a surjective symbol. Then the composition operator $C_{\varphi}$ is not generalized hyperbolic on $S(\mathbb R).$
\end{theorem}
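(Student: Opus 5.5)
The plan is to reduce to the fixed-point case and then invoke Remark \ref{r1*}. A surjective continuous symbol $\varphi:\mathbb R\to\mathbb R$ satisfying C.2 has $|\varphi(x)|\to\infty$ as $|x|\to\infty$. Surjectivity together with continuity then forces $\varphi$ to be unbounded above and below.

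\textbf{Step 1: produce a fixed point.} Consider $\psi(x)=\varphi(x)-x$. For a continuous surjection $\varphi$ with $|\varphi(x)|\to\infty$, either $\varphi(x)\to\pm\infty$ as $x\to\pm\infty$ (increasing-type ends), or $\varphi(x)\to\mp\infty$ (decreasing-type ends). In the decreasing-type case, $\psi(x)\to-\infty$ as $x\to+\infty$ and $\psi(x)\to+\infty$ as $x\to-\infty$. The intermediate value theorem then gives a fixed point $x_0$, and Remark \ref{r1*} (via Theorem \ref{t1} and Theorem \ref{prelim}, using that each $Ker(\|\cdot\|_k)=\{0\}$ is trivially invariant) shows that $C_\varphi$ is not generalized hyperbolic.

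\textbf{Step 2: the increasing-type case, the main obstacle.} Here a fixed point need not exist, for example $\varphi(x)=x+1$. So I would argue directly with the splitting $S(\mathbb R)=M\oplus N$. If $M\neq\{0\}$, take $0\neq y\in M$. Condition (GH3) says $\|y\circ\varphi_n\|_\alpha\le c\,t^n\|y\|_\beta$, and in particular $\sup_x|y(\varphi_n(x))|\le c\,t^n\|y\|_\beta$ for $\|\cdot\|_\alpha\ge$ a sup-type seminorm. Since $\varphi$ is surjective, so is $\varphi_n$, hence $\sup_x|y(\varphi_n(x))|=\sup|y|$ for every $n$. This constant cannot decay geometrically unless $y=0$, which is a contradiction. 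So $M=\{0\}$ and $N=S(\mathbb R)$.

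Then $T(N)\supseteq N$ with $T|_N$ an isomorphism onto its image means $C_\varphi$ is invertible, i.e. $\varphi$ is a bijection with smooth inverse admissible as a symbol, and $S=C_{\varphi^{-1}}$. Applying the same sup-norm argument to $S^n z=z\circ\varphi^{-1}_n$ gives $\sup|z|\le c\,t^n\|z\|_\beta\to0$, forcing $N=\{0\}$. Hence $S(\mathbb R)=\{0\}$, which is absurd.

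\textbf{Step 3: bookkeeping and overall strategy.} The delicate point is that the paper's seminorms start at $j\ge1$, so I would take the sup of $|z'|$, or use $(1+x^2)^n|f^{(j)}|$ with derivatives. Surjectivity alone then does not give invariance of $\sup|z'\circ\varphi_n|\,|\varphi_n'|$. I would instead compare at one point: the chain rule lower bound appears on one of the two sides $M$ or $N$. Alternatively, I would work with the sup of $|f|$, which is dominated by $\|f\|_1$ since $f(x)=-\int_x^\infty f'$ and $(1+x^2)^{-1}$ is integrable. This gives $\sup|f|\le \pi\|f\|_1$, and therefore the decay in (GH3) forces the sup of $|f|$ to decay, making the argument of Step 2 legitimate. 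With this, Step 2 in fact handles all surjective symbols uniformly, and Step 1 becomes only an optional shortcut.
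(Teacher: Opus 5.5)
Your proof is correct and has the same skeleton as the paper's. Both use surjectivity of $\varphi_n$ and the decay of $\|C_\varphi^n y\|_1$ to force $M=\{0\}$, deduce that $C_\varphi$ is invertible, and rerun the argument for the inverse to force $N=\{0\}$. Your bound $\sup|g|\le\pi\|g\|_1$ together with $\sup|y\circ\varphi_n|=\sup|y|$ is a cleaner form of the paper's integral/dominated-convergence step, and unlike the paper it needs no reduction to fixed-point-free symbols, so Step 1 can indeed be dropped. You also never need to justify $S=C_{\varphi^{-1}}$: the identity $z=(S^nz)\circ\varphi_n$ already gives $\sup|z|\le\sup|S^nz|$.
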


\begin{proof}
Without loss of generality, assume that $\varphi$ has no fixed points. Suppose, by contradiction, that $\varphi$ is generalized hyperbolic. Then there exist closed subspaces $N,M\subseteq S(\mathbb{R})$ such that
\[
S(\mathbb{R})=N\oplus M.
\]
Let $f\in M.$ Then
\[
\|C_\varphi^n f\|_{1}\longrightarrow 0, \ \ \textrm{as} \ \ n\to +\infty.
\]

Since $\varphi$ is surjective, for every $y\in \operatorname{supp}(f)$ there exists an unbounded monotone sequence
$\{x_n\}_{n\in\mathbb N}$ such that
$\varphi_n(x_n)=y.$
Hence,
\[
|f(\varphi_n(x))-f(y)|
=
\left|
\int_{x_n}^{x}
\frac{d}{dr}(f\circ \varphi^n)(r)\,dr
\right|
\leq
\int_{-\infty}^{\infty}
\left|
\frac{d}{dr}(f\circ \varphi^n)(r)
\right|\,dr.
\]

Since $\varphi$ has no fixed point, for each $x\in\mathbb R$ we get $f(\varphi_n(x))\to 0$ as $n\to+\infty$ and the dominated convergence theorem yields
$f(y)=0.$ As $y\in \operatorname{supp}(f)$ it is a contradiction, unless $f=0$ but this implies $M=\{0\}.$  Therefore, $C_\varphi$ is invertible on $S(\mathbb{R})$, that is 
\[
C_\varphi^{-1}:S(\mathbb{R})
\longrightarrow
S(\mathbb{R}),
\]
Hence, for every $g\in C^{\infty}_c(\mathbb{R})$,
\[
C_\varphi C_\varphi^{-1}g
=
C_\varphi^{-1}C_\varphi g.
\]
 We show that $\varphi$ is injective. Let
$x_1,x_2\in\mathbb{R}$ satisfy
$
\varphi(x_1)=\varphi(x_2);
$ for every $g\in C_c(\mathbb{R})$ we have 
\[
(C_\varphi^{-1}g)(\varphi(x_1))
=
g(x_1)
=
g(x_2)
=
(C_\varphi^{-1}g)(\varphi(x_2)).
\]
Choose $h\in C^{\infty}_c(\mathbb{R})$ such that
$h=1$ on $[x_1,x_2]$, and define
$g(x)=x\,h(x).
$
Substituting this function into the previous identity yields
$x_1=x_2.
$
Hence $\varphi$ is bijective and this implies 
$C_\varphi^{-1}
=
C_{\varphi^{-1}}$ and $\varphi^{-1}$ is also a symbol. Finally, We now observe that $C_{\varphi^{-1}}$ is hyperbolic with the decomposition
\[
M'=S(\mathbb{R}),
\qquad
N'=\{0\}.
\]
However, applying the same analysis above to the symbol $\varphi^{-1}$ yields
\[
M'=\{0\},
\]
which is impossible. 
Therefore, $\varphi$ is not generalized hyperbolic.
\end{proof}

\begin{corollary}
If $\varphi$ is a polynomial symbol, the composition operator $C_{\varphi}$ does not possess the positive shadowing property on $S(\mathbb{R}).$
\end{corollary}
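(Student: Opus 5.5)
The plan is to split according to the parity of the degree of the polynomial $\varphi$ and reduce each case to a result already in the paper. If $\deg\varphi$ is even, Theorem \ref{t2} applies directly, and $C_\varphi$ fails the positive shadowing property. Constant symbols are excluded because they violate condition C.2, so $\varphi$ cannot be a symbol. Hence the only remaining case is $\deg\varphi$ odd, and there I will exhibit a fixed point of $\varphi$ so that Theorem \ref{t1} applies.

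\textbf{Odd degree $\geq 3$.} Consider $\psi(x)=\varphi(x)-x$. It is again a polynomial of the same odd degree, because subtracting $x$ cannot cancel the leading term when the degree exceeds one. Since $\psi(x)\to\pm\infty$ with opposite signs as $x\to\pm\infty$, the intermediate value theorem gives $x_0$ with $\psi(x_0)=0$, that is, $\varphi(x_0)=x_0$. Theorem \ref{t1} then shows that $C_\varphi$ does not have the positive shadowing property on $S(\mathbb R)$.

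\textbf{Degree one.} This is the delicate case. Write $\varphi(x)=ax+b$ with $a\neq 0$. If $a\neq 1$, then $x_0=b/(1-a)$ is a fixed point and Theorem \ref{t1} applies again. If $a=1$ and $b=0$, then $\varphi$ is the identity, every point is fixed, and Theorem \ref{t1} applies trivially.

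\textbf{The obstacle: translations.} The remaining case is $\varphi(x)=x+b$ with $b\neq 0$. It has no fixed point and odd degree, so neither earlier theorem covers it, and I expect this to be the main obstacle. I would mimic the proof of Theorem \ref{t2}, using the pseudotrajectory $f_n=\frac{\delta}{d(f,0)}\sum_{k<n}C_\varphi^k f$ with $f\geq 0$ compactly supported and nonzero. For translations, $C_\varphi^k f$ are translates with eventually disjoint supports. The sup norm of $f_n$ therefore stays bounded, and the argument must use a different feature of the partial sums.

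The feature I would try first is that $f_n$ carries total mass growing linearly in $n$ spread along an interval of length about $n|b|$. A shadowing orbit $C_\varphi^n h = h(\cdot+nb)$ is just a translate of a single Schwartz function. Hence $\|C_\varphi^n h - f_n\|_1$ is at least the value of $f_n$ at points near $x=-nb/2$, where $h(\cdot+nb)$ is tiny but $f_n$ still equals a positive bump of fixed height $\frac{\delta}{d(f,0)}\max f$. The difficulty is that this weighted seminorm uses $(1+|x|^2)^n$, which makes the discrepancy large rather than small. That works in our favour provided the lower bound is not lost in the metric normalization $\frac{p}{1+p}$.

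I would quantify this to get $d(C_\varphi^n h,f_n)\geq c>0$ independently of the choice of $\delta$. Then I would check that this is compatible with shadowing failing for small $\varepsilon$, adjusting the scaling $\delta/d(f,0)$ if needed.
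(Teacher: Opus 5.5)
Your case split is the one implicit in the paper, and your odd-degree and degree-one cases are sound. The gap is the even-degree step: Theorem \ref{t2} is false, and so is the corollary for even-degree polynomials without fixed points.

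\textbf{Counterexample.} Take $\varphi(x)=x^2+1$. Its iterates $\varphi_n$ are even polynomials of degree $2^n$ with nonnegative coefficients. Hence $\varphi_n(x)\ge\max\{1+x^2,a_n\}$, where $a_n=\varphi_n(0)$ satisfies $a_{n+1}=a_n^2+1$. Moreover $|\varphi_n^{(i)}(x)|\le 2^{ni}\max\{\varphi_n(x),\varphi_n(1)\}\le 2\cdot 2^{ni}\varphi_n(x)^2$. Fa\`a di Bruno's formula then gives
\[
\|C_\varphi^n f\|_m\le c_m\,(2^n/a_n)^m\,\|f\|_{2m}\le c_m' 2^{-n}\|f\|_{2m},
\]
because $a_n\ge 2^{2^{n-2}}$; take $c_m'\ge1$. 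Now let $x_{j+1}=C_\varphi x_j+w_j$ be any pseudotrajectory. It is shadowed by its own initial point, since
\[
\|x_n-C_\varphi^n x_0\|_m\le\sum_{k<n}\|C_\varphi^{n-1-k}w_k\|_m\le 2c_m'\sup_k\|w_k\|_{2m}.
\]
So $C_\varphi$ has the positive shadowing property. In fact it is generalized hyperbolic with $M=S(\mathbb R)$ and $N=\{0\}$.

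\textbf{Where Theorem \ref{t2} breaks.} The bound $\|\frac{\delta_0}{d(f,0)}g\|_1\le\varepsilon$ in its last step only holds for those $g$ whose rescaled increment $\frac{\delta_0}{d(f,0)}(C_\varphi g-g)$ is $\delta$-small. It therefore only expresses continuity of $(I-C_\varphi)^{-1}=\sum_n C_\varphi^n$. Letting $\|g\|_1\to\infty$ ignores that the scaling factor depends on $g$. In even degree you can conclude only when $\varphi$ has a fixed point, via Theorem \ref{t1}. What your argument actually proves is the statement for odd-degree polynomials and for polynomials with a fixed point.

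\textbf{Theorem \ref{t1} holds.} The paper's argument via $f'(x_0)$ only works when $|\varphi'(x_0)|=1$, because multiplication by a scalar $\lambda$ with $|\lambda|\neq1$ does shadow. The cleaner reason is that $\ell(g)=g(x_0)$ satisfies $\ell\circ C_\varphi=\ell$. So $x_n=\sum_{k<n}C_\varphi^k z$ has $\ell(x_n)=n\,z(x_0)$, while $\ell(C_\varphi^n h)=h(x_0)$ stays constant.

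\textbf{Translations.} Your idea is right, and it is a better route than the paper's remark ``argue as in Theorem \ref{t2}''. That remark needs $C_\varphi^n h\to0$, which fails here by Theorem \ref{th5}. But you left the case unfinished, and your worries about the metric normalization and the factor $\delta/d(f,0)$ are unnecessary. Here is the completion:
\begin{itemize}
\item Take $z\in C_c^\infty(\mathbb R)$ supported in an interval of length $<|b|$, with $z(t_0)\neq0$, scaled so that $\pm z\in U$.
\item Let $x_n=\sum_{k<n}z(\cdot+kb)$, so that $C_\varphi x_n-x_{n+1}=-z$.
\item Let $V=\{g:\sup_x(1+x^2)|g(x)|<1\}$; this is a neighborhood of $0$, since $\sup_x(1+x^2)|g(x)|$ is a continuous seminorm on $S(\mathbb R)$.
\item Suppose $h$ $V$-shadows $(x_n)$. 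Evaluating at $x=t_0-kb$ with $n=2k$ gives $(1+(t_0-kb)^2)\,|z(t_0)-h(t_0+kb)|<1$ for all $k$.
\item This is impossible, because $h(t_0+kb)\to0$ while the weight tends to infinity.
\end{itemize}
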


\section{Topological expansiveness and Li-Yorke chaos of \texorpdfstring{$C_{\varphi}$}{C varphi}}\label{sec4}

In this section, we provide a first approach to the study of topological expansivity for a class of symbols that included polynomial symbols. 

\begin{theorem}\label{th5}
    Let $\varphi(x) = ax + b$ be a symbol, where $a,b\in \mathbb{R}$ with $a\neq 0$. Then, $C_{\varphi}$ is topologically expansive in $S(\mathbb{R})$ when $a\neq1$ or $a=1$ and $b\neq0.$  In the case $a=-1$ and $b\neq0$ $C_\varphi$ or $a=1$ and $b=0$ does not have a topological expansive in $S(\mathbb{R}).$
\end{theorem}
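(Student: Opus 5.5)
The plan is to compute the orbit of an arbitrary $f\in S(\mathbb R)$ explicitly and read off each seminorm directly. For $\varphi(x)=ax+b$ with $a\neq 0$, the map $\varphi$ is an affine bijection and its inverse $\varphi^{-1}(x)=(x-b)/a$ is again an affine symbol. Hence $C_\varphi\in GL(S(\mathbb R))$ with $C_\varphi^{-1}=C_{\varphi^{-1}}$, and the full orbit is $\{f\circ\varphi_n : n\in\mathbb Z\}$, where $\varphi_n$ means $(\varphi^{-1})_{|n|}$ for $n<0$. By induction, $\varphi_n(x)=a^nx+b_n$ for all $n\in\mathbb Z$, with $b_n=b\,\frac{a^n-1}{a-1}$ if $a\neq1$ and $b_n=nb$ if $a=1$. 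The chain rule then gives
\[
(f\circ\varphi_n)^{(j)}(x)=a^{nj}f^{(j)}(\varphi_n(x)).
\]
One preliminary remark is used throughout. If $0\neq f\in S(\mathbb R)$, then $f'\not\equiv0$, since otherwise $f$ would be constant and, decaying at infinity, equal to $0$. Consequently it suffices to work with the $j=1$ term of $\|\cdot\|_k$, which is present because the seminorms involve $1\le j\le k$.

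\emph{Case $|a|\neq1$.} Using only the weight bound $(1+|x|^2)^k\ge1$, we get
\[
\|C_\varphi^n f\|_1\ \ge\ \sup_x |a|^n|f'(\varphi_n(x))| = |a|^n\|f'\|_\infty,
\]
because $\varphi_n$ is surjective. The right-hand side tends to $\infty$ as $n\to+\infty$ when $|a|>1$, and as $n\to-\infty$ when $|a|<1$. So every nonzero orbit is unbounded in $\|\cdot\|_1$; for $|a|>1$ this even gives positive expansivity.

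\emph{Case $a=1$, $b\neq0$.} Here $C_\varphi^nf(x)=f(x+nb)$ is a translation and the derivative factor is $1$, so the weight must do the work. Pick $y$ with $f'(y)\neq0$ and evaluate at $x=y-nb$:
\[
\|C_\varphi^n f\|_1\ \ge\ (1+(y-nb)^2)\,|f'(y)|\ \longrightarrow\ \infty \quad (n\to\pm\infty).
\]
Hence $C_\varphi$ is topologically expansive, and in fact positively so.

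\emph{Negative cases.} If $a=1$ and $b=0$, then $C_\varphi$ is the identity and every orbit is the single point $\{f\}$. If $a=-1$, then $\varphi_2(x)=-(-x+b)+b=x$, so $C_\varphi^2=I$ and every orbit is $\{f,f\circ\varphi\}$, which is bounded in every seminorm. So $C_\varphi$ is not topologically expansive in these cases. This also shows that the phrase ``$a\neq1$'' in the positive part of the statement must be read as $|a|\neq1$.

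I expect no real obstacle. The only points needing care are the bookkeeping for negative iterates via $C_{\varphi^{-1}}$, and the observation that in the translation case growth must come from the polynomial weight rather than from the derivative factor.
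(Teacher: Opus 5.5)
Your proof is correct and follows the paper's outline. Both compute the affine iterates explicitly and use the chain rule $(f\circ\varphi_n)'=a^n\,f'\circ\varphi_n$. Both bound the orbit from below through the $j=1$ term of the seminorm, let the polynomial weight do the work in the translation case, and use $C_\varphi^2=I$ or $C_\varphi=I$ in the negative cases. Like the paper's proof, which begins with $a\neq\pm1$, you correctly read ``$a\neq1$'' as $|a|\neq1$.

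The genuine difference is in the case $|a|\neq1$. The paper fixes $x_0$ with $f'(x_0)\neq0$ and evaluates at the preimage $z_n$ with $\varphi_n(z_n)=x_0$. This gives $\|C_\varphi^nf\|_m\ge|a|^n(1+|z_n|^2)|f'(x_0)|$ as $n\to+\infty$. Since $z_n=a^{-n}(x_0-p)+p$, with $p=b/(1-a)$ the fixed point of $\varphi$, the weight grows like $|a|^{-2n}$ when $0<|a|<1$ and beats the decaying factor $|a|^n$. So the paper's route gives \emph{positive} expansivity for every $|a|\neq1$. It needs $x_0\neq p$, a choice the paper leaves implicit but which is always available because $\{f'\neq0\}$ is open and nonempty.

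Your bound $\|C_\varphi^nf\|_1\ge|a|^n\|f'\|_\infty$ discards the weight and uses only the surjectivity of $\varphi_n$. It is cleaner and requires no care in choosing the evaluation point. However, for $|a|<1$ it forces you onto the backward orbit $n\to-\infty$. You therefore get two-sided topological expansivity, which is exactly what the statement asks given the definition over $n\in\mathbb Z$. You do not get the forward-orbit conclusion that the paper's weighted estimate provides.
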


\begin{proof} Let $a\neq \pm 1,$ for $x\in\mathbb{R}$ we have
$$\varphi_n(x):=\left\{\begin{array}{cc}
  a^n x+\displaystyle\frac{a^n-1}{a-1}b,   &  b\neq 0 \\
a^{n}x, & b=0
\end{array}
\right.
$$
and
$$\varphi_{-n}(x):=\left\{\begin{array}{cc}
  a^{-n} x+\displaystyle\frac{a^{-n}-1}{a^{-1}-1} a^{-1}b,   &  b\neq 0 \\
a^{-n}x,     & b=0
\end{array},
\right.$$
Let us analyze the positive orbits; the analysis for the negative orbits is analogous.
 Let $f \in S(\mathbb{R})$ such that $f'(x_0) \neq 0$ for some $x_0\in\mathbb{R}.$ In the case, $b\neq 0$ we can define $$z_{n}:=a^{-n} \left(x_0 - \frac{a^{n} - 1}{a - 1}b \right), \ \ \ n=1,2,3,\cdots$$
then, for $m\in\mathbb{N}$ we get 
$$|a|^n \ (1+|z_{n}|^2) \ |f'(x_0)| \leq |a|^n \ (1+|z_{n}|^2)^{m} \ |f'(\varphi_{n}(z_{n}))|\leq  \|C_{\varphi}^n f\|_m.$$
 Since  $$|a|^n \ (1+|z_{n}|^2)\to +\infty, \ \ \textrm{as} \ \ n\to+\infty,$$
 we obtained $$\|C_{\varphi}^n f\|_m\to +\infty, \ \ \textrm{as} \ \ n\to+\infty$$ that is, $C_\varphi$ is expansive topologically on $S(\mathbb{
 R}
 ).$ On the other hand, If $a =1$ and $b\neq0$ then $\varphi_n(x)=x+nb,$ thus 
$$(1 + (x_0 +n b)^2)^m |f'(x_0)| \leq \|C_{\varphi}^n f\|_m.$$ Hence, $C_{\varphi}$ is expansive topologically on $S(\mathbb{
 R}
 ).$ The cases $b=0$ are similar. Finally, when $a=-1$ and $b\neq 0$ we have $\varphi_{2n}(x)=x$ this implies $\|C_{\varphi}^{2n}f\|_m=\|f\|_m$ thus $\sup_n\|C_{\varphi}^{2n}f\|_m<+\infty,$ that is, $C_{\varphi}$ is not expansive topologically on $S(\mathbb{
 R}
 ).$ Similarly, when $a=1$ and $b=0.$
\end{proof}

\begin{remark} Some consequence of expansiveness property for the simple polynomial case:
    \begin{enumerate}
        \item[$\bullet$]
    Theorem \ref{th5} provided a counterexample of \cite[Corollary 36]{bernardes2025generalized}. Moreover, by \cite[Theorem 38] {bernardes2025generalized} we have that $C_\varphi$ neither Li-Yorke chaotic nor topologically transitive provided $a\neq 1.$
    \item[$\bullet$] By Theorem~\ref{th5}, any orbit of $0\neq f\in S(\mathbb R)$ to expanded. Hence, argue as in Theorem~\ref{t2} we can deduced that for any polynomial symbol the operator $C_{\varphi}$ does not has the positive Shadowing property on $S(\mathbb R).$
    \item[ $\bullet$] The question for polynomial $\varphi$ of degree major that one is a natural one. However, the even degree case is easy since is enough to take $f\in C_c^{\infty}(\mathbb R)$ such that $\textrm{supp}(f)\cap \varphi(\mathbb R)=\emptyset$ then $C_\varphi^n f=0$ because $\textrm{supp}(f)\cap \varphi_n(\mathbb R)\subset \textrm{supp}(f)\cap \varphi(\mathbb R)$ for all $n.$  
\end{enumerate}
\end{remark}

In the following theorem we show a sufficient condition for the generated expansive orbit that included the odd degree polynomial case. 

\begin{theorem}\label{132}
Let $\varphi:\mathbb{R}\to\mathbb{R}$ be a symbol, and denote by $X$ the set of fixed point for $\varphi.$
\begin{enumerate}
    \item[(1)] If $C_\varphi\in  GL(S(\mathbb R))$ and $X$ is not empty then $C_\varphi$ is positive expansive topologically on $S(\mathbb R)$ provided that \[\min_{x\in  \mathbb R}|\varphi'(x)|>1.\]
    
    \item[(2)]  Assume $X=\emptyset,$ $\varphi$  surjective map and suppose that there exists $k>0$ such that
\[
|\varphi'(x)|\ge 1,\qquad x\notin[-k,k],
\]
and that $\varphi'$ has only countably many zeros. Then the composition operator $C_\varphi$ is topologically expansive.
\end{enumerate}
\end{theorem}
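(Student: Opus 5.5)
Both parts rest on the chain-rule identity $(C_\varphi^n f)'(x)=f'(\varphi_n(x))\,\varphi_n'(x)$ with $\varphi_n'(x)=\prod_{i=0}^{n-1}\varphi'(\varphi_i(x))$. This immediately gives the lower bound
\[
\|C_\varphi^n f\|_m\ \ge\ (1+|x|^2)^m\,|f'(\varphi_n(x))|\,|\varphi_n'(x)|,\qquad x\in\mathbb R,\ m\ge1,
\]
exactly as in the proof of Theorem \ref{th5}. For a given $0\neq f\in S(\mathbb R)$ I will fix a point $y_0$ with $f'(y_0)\neq0$; such a point exists because a nonzero Schwartz function is not constant. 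The plan is then to choose, for each $n$, a point $z_n$ with $\varphi_n(z_n)=y_0$ (or $\varphi_n(z_n)$ close to $y_0$) and to show that $|\varphi_n'(z_n)|\to\infty$, or at least $(1+|z_n|^2)|\varphi_n'(z_n)|\to\infty$.

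\emph{Part (1).} Here $\min|\varphi'|>1$, so $\varphi$ is strictly monotone and $|\varphi(x)-\varphi(y)|\ge\lambda|x-y|$ with $\lambda:=\min|\varphi'|>1$. Hence $\varphi$ is a bijection of $\mathbb R$; the fact that $C_\varphi\in GL(S(\mathbb R))$ gives the same conclusion, as shown in the proof of Theorem \ref{hyperb}. I take $z_n:=\varphi^{-1}_n(y_0)$. Then $|\varphi_n'(z_n)|=\prod_{i=0}^{n-1}|\varphi'(\varphi_i(z_n))|\ge\lambda^n\to\infty$, and so $\|C_\varphi^nf\|_1\ge|f'(y_0)|\lambda^n\to\infty$. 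This gives positive topological expansivity. The fixed point $x_0\in X$ is not really needed for the estimate; it only guarantees that the orbit $z_n$ stays controlled, since $z_n\to x_0$ by contraction of $\varphi^{-1}$. I therefore do not expect any obstacle in this part.

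\emph{Part (2).} Because $X=\emptyset$ and $\varphi$ is continuous, either $\varphi(x)>x$ for all $x$ or $\varphi(x)<x$ for all $x$. In the first case (the second is symmetric) every forward orbit tends to $+\infty$, and every backward preimage chain of $y_0$ decreases to $-\infty$. Using surjectivity, I choose preimages $z_n$ with $\varphi_n(z_n)=y_0$ along a chain $w_0=y_0$, $\varphi(w_{i+1})=w_i$. Only finitely many of the $w_i$ lie in $[-k,k]$: the chain is monotone and has no finite limit, since such a limit would be a fixed point. Moreover, since $\varphi'$ has only countably many zeros, I can first perturb $y_0$ within the open set $\{f'\neq0\}$ so that no element of the (countable) backward tree of $y_0$ is a critical point. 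Along the chain, the factors $|\varphi'(w_i)|$ are then $\ge1$ outside $[-k,k]$, and there are finitely many nonzero factors inside. This gives $|\varphi_n'(z_n)|\ge c>0$ uniformly in $n$, while $|z_n|=|w_n|\to\infty$. Therefore $\|C_\varphi^nf\|_m\ge(1+|z_n|^2)^m\,c\,|f'(y_0)|\to\infty$. For the negative orbit, $\varphi$ being a bijection would make the same argument with forward images $\varphi_n(y_0)\to+\infty$ work; in any case unboundedness of the positive orbit already gives topological expansivity.

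The main obstacle is the countable-zeros perturbation in Part (2): I must make sure that the chosen backward chain avoids critical points, and that the finitely many factors inside $[-k,k]$ give a bound uniform in $n$. The uniformity follows because the chain is fixed once and the $z_n$ are successive terms of it, so the inside factors form one fixed finite product. The first thing I would try is to build the chain once and for all from a single generic $y_0$.
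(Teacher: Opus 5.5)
Your proof is correct. Part (2) is essentially the paper's argument: a backward chain going to $-\infty$, the product $\prod_{j=1}^n\varphi'(w_j)$ bounded below, the weight $(1+w_n^2)\to\infty$, and countability used to choose the base point. Part (1) takes a genuinely different and simpler route.

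The paper argues by contradiction and splits according to the behaviour of the backward chain of $y$. If the chain converges to a fixed point $x_0$, the paper first transfers $f'(y)\neq0$ to $f'(x_0)\neq0$ via an integral estimate, and then uses the growth of $f'(x_0)\varphi'(x_0)^n$. If the chain escapes to infinity, it uses the polynomial weight. You instead evaluate $(C_\varphi^nf)'$ directly at $z_n=\varphi_n^{-1}(y_0)$ and use $|\varphi_n'|\geq\lambda^n$ on all of $\mathbb R$. This gives $\|C_\varphi^nf\|_1\geq\lambda^n|f'(y_0)|$ at once.

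Your route has three advantages. It needs no case analysis. It bypasses the transfer step, which requires uniform control of $(f'\circ\varphi_n)'=f''(\varphi_n)\varphi_n'$, and $\|C_\varphi^nf\|_1$ does not provide that. It also shows that under $\min|\varphi'|>1$ the hypothesis $X\neq\emptyset$ is automatic, since $\varphi(x)-x$ has derivative bounded away from $0$, while $C_\varphi\in GL(S(\mathbb R))$ is never used.

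What the fixed point genuinely buys is the weaker pointwise hypothesis $|\varphi'(x)|>1$ for all $x$, as phrased in the introduction. Then there is no uniform $\lambda>1$. However, the chain $z_j$ converges to the unique fixed point $x_0$, so $|\varphi'(z_j)|\to|\varphi'(x_0)|>1$, and $\prod_{j=1}^n|\varphi'(z_j)|$ still grows exponentially. Your argument adapts with that small change.

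One point to make explicit in Part (2) concerns your perturbation of $y_0$. The countability that makes it work is not that of the backward tree of $y_0$. It is that of the set of bad base points, which is contained in $\bigcup_{j\geq1}\varphi_j(Z)$, where $Z$ is the zero set of $\varphi'$: if $\varphi_j(w)=y_0$ and $\varphi'(w)=0$, then $y_0\in\varphi_j(Z)$. This is the paper's union of forward orbits of the critical points. Choosing $y_0$ in the nonempty open set $\{f'\neq0\}$, rather than in $\operatorname{supp}(f)$ as the paper does, is the right way to run it.
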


\begin{proof}
$(1).$ Suppose that there exists $0\neq f\in S(\mathbb R)$ such that
$$M:=\sup_{n}\|C^n_{\varphi}f'\|_{1}\leq \sup_{n}\|C_\varphi^n f\|_{1}<+\infty.$$
Since $\varphi$ is invertible, for $y\in \textrm{supp}(f)$ with $f'(y)\neq0$ we can find a monotonic sequence $\{x_n\}$ such that $\varphi_n(x_n)=y$ and $\varphi(x_{n+1})=x_n.$ In the case where $x_n\to x_0$ we get $x_0\in X$ thus
\[|f'(x_0)-f'(y)|=|f'(\varphi_n(x_0))-f'(\varphi_n(x_n))|\leq \int_{[x_0,x_n]} \left|\frac{d}{dr}(f'\circ\varphi_n)(r)\right| \ dr\leq \ M'_1 \ |x_n-x_0|.\]
Taking $n\to+\infty$ we get $f'(y)=f'(x_0)$ which implies $f'(x_0)\neq 0.$ From $\varphi'(x)>1$ we deduced that $\varphi'_n(x)>1$ for all $n$ and for all $x$. Hence,

\[(1+|x_0|^2) \ |f'(x_0)| \ (\varphi'(x_0))^{n-1} \leq M', \ \ n=1,2,\cdots.\]
It is a contradiction, because $(\varphi'(x_0))^{n-1}\to +\infty,$ as $n\to+\infty$ . Therefore, the sequence $\{x_n\}$ satisfies
$$|x_n|\to+\infty, \ \ \textrm{as} \ \ n\to+\infty.$$ Since $|\varphi'_n(x_{n+1})|>1$ we have \[(1+|x_n|^2) \ |f'(y)|\leq (1+|x_n|^2) \ |f'(y)| \ |\varphi'_n(x_{n+1})|\leq \sup_{n}\|C_\varphi^n f\|_{\pi_1}.\] 
That is,
\[\sup_{n}\|C_\varphi^n f\|_{1}=+\infty,\] a contradiction. Consequently, $C_\varphi$ is a positive topological expansive in $S(\mathbb R).$ To prove 
  $(2);$  Without loss of generality, assume that $\varphi$ has no fixed points. Since $\varphi$ is surjective, we may suppose that
\[
x<\varphi(x), \qquad x\in\mathbb{R}.
\]

Let $f\in S(\mathbb{R})$ be nonzero and choose
$y\in \operatorname{supp}(f)$ such that $f'(y)\neq 0$.
Since $\varphi$ is surjective, there exists a decreasing sequence
$\{x_n\}_{n\in\mathbb N}$ satisfying
\[
\varphi^n(x_n)=y,
\qquad
\varphi(x_{n+1})=x_n,
\]
and therefore $x_n\to -\infty$.

Using the definition of the seminorms of $S(\mathbb{R})$, we obtain
\[
(1+x_n^2)\,
\bigl|f'(y)\varphi_n'(x_n)\bigr|
\leq
\|C_{\varphi^n}f\|_{1}.
\]

Moreover,
\[
\varphi_n'(x_n)
=
\prod_{j=1}^{n}\varphi'(x_j).
\]

Since $x_n\to -\infty$ and $|\varphi'(x)|\ge1$ for
$x\notin[-k,k]$, there exists $N\in\mathbb N$ such that
$x_n\notin[-k,k]$ for all $n\geq N$. Hence
\[
|\varphi_n'(x_n)|
=
\prod_{j=1}^{n}|\varphi'(x_j)|
\longrightarrow\infty \text{ or  }|\varphi'_n(x_n)|\ge |\varphi'_N(x_N)|.
\]

Consequently,
\[
\|C_{\varphi^n}f\|_{1}
\longrightarrow\infty.
\]

It remains to show that such a point $y$ exists. Suppose, by contradiction,
that for every $y\in\operatorname{supp}(f)$ and every backward orbit
$\{x_n\}_{n\in\mathbb N}$ satisfying $\varphi^n(x_n)=y$, there exists
$n\in\mathbb N$ such that
\[
\varphi'(x_n)=0.
\]

Since $\varphi'$ has only countable many zeros, say
$z_1,\ldots,z_m,\ldots$, it follows that
\[
\operatorname{supp}(f)
\subset
\bigcup_{j=1}^{\infty} O(z_j),
\]
where $O(z_j)$ denotes the orbit of $z_j$ under $\varphi$. Since each orbit is countable, the set
\[
\bigcup_{j=1}^{\infty} O(z_j)
\]
is countable. On the other hand, $\operatorname{supp}(f)$ is uncountable
because $f\neq 0$ and $f$ is continuous. This contradiction proves that
there exists $y\in\operatorname{supp}(f)$ and a backward orbit
$\{x_n\}_{n\in\mathbb N}$ such that
\[
\varphi'(x_n)\neq 0,
\qquad n\in\mathbb N.
\]

Therefore, $C_\varphi$ is topologically expansive.

\end{proof}

\begin{example}
    Consider 
\[
\varphi(x):=
\left\{
\begin{array}{ll}
x+e^{-\frac{1}{1-x^2}}, & x\in [-1,1],\\[6pt]
x, & x\notin [-1,1].
\end{array}
\right.
\]
    Observe that $\varphi\in C^{\infty}(\mathbb{R})$. Moreover, if $x\notin [-1,1]$, then the following holds:
    \[
    |x|\le |\varphi(x)|.
    \]
    It is not difficult to verify that all the derivatives of $\varphi$ are bounded. Therefore, $\varphi$ is a symbol of the composition operator. Since it satisfies the hypotheses of the previous theorem, the corresponding composition operator is topologically expansive on $S(\mathbb R).$
\end{example}

\begin{lemma}
\label{exf}
    Let $I\subset\mathbb{R}^n$ a bounded interval and $j\in \mathbb{N}$ then there exists $f\in S(\mathbb{R})$ such that
\[
f^j(x)>1, \ \text{  for  } x\in I.
\]
\end{lemma}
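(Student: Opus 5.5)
Since $I$ is bounded, I read the statement as $I\subset\mathbb R$ and $f^{j}$ as the $j$-th derivative $f^{(j)}$, which is how derivatives are written in condition C.1. The plan is to produce a polynomial of degree $j$ whose $j$-th derivative is a constant larger than $1$, and then cut it off with a bump function that equals $1$ near $\overline I$. Such a function lies in $C_c^\infty(\mathbb R)\subset S(\mathbb R)$.

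Write $I\subseteq[-R,R]$ for some $R>0$. Fix $\chi\in C_c^\infty(\mathbb R)$ with $0\le\chi\le 1$, $\chi\equiv 1$ on $[-R-1,R+1]$ and $\operatorname{supp}\chi\subseteq[-R-2,R+2]$. One takes the usual mollification of the indicator of $[-R-\tfrac32,R+\tfrac32]$, or builds it from $e^{-1/t}$ as in the Example preceding the lemma. Then define
\[
f(x):=\frac{2}{j!}\,x^{j}\,\chi(x),\qquad x\in\mathbb R .
\]

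The function $f$ is smooth with compact support, so $\sup_x(1+|x|^2)^n|f^{(i)}(x)|<\infty$ for all $n$ and $i$, and hence $f\in S(\mathbb R)$. On the open set $(-R-1,R+1)$, which contains $\overline I$, we have $\chi\equiv1$. There $f(x)=\tfrac{2}{j!}x^{j}$, so every derivative of $\chi$ vanishes, and the Leibniz rule reduces to a single term:
\[
f^{(j)}(x)=\frac{2}{j!}\,j!=2>1,\qquad x\in I .
\]
This proves the claim. Equivalently, one can take any $g\in C_c^\infty$ with $g\equiv 2$ near $\overline I$ and integrate it $j$ times. This also works, but the antiderivative must then be cut off again, so the polynomial version above is cleaner.

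The only real obstacle is to ensure that the cutoff does not spoil the $j$-th derivative on $I$. Choosing $\chi\equiv1$ on a neighbourhood of $\overline I$, rather than only on $I$, settles this, because all derivatives of $\chi$ then vanish on $I$, including at its endpoints. The rest is routine: the existence of $\chi$ is standard, and membership in $S(\mathbb R)$ follows from compact support.
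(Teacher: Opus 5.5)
Your proof is correct, but it takes a different route from the paper. The paper starts from the Gaussian $e^{-x^2}$, whose $j$-th derivative is $P_j(x)e^{-x^2}$ with $P_j$ a polynomial of degree $j$. Because $P_j$ has only finitely many zeros, the paper translates the Gaussian by some $c$ so that $\overline I-c$ lands where this derivative is positive. Compactness then gives a lower bound $f_0>0$, and the function is rescaled by $2/f_0$. You instead glue the polynomial $\tfrac{2}{j!}x^j$ to a cutoff that is identically $1$ on a neighbourhood of $\overline I$, so that $f^{(j)}\equiv 2$ on $I$ exactly.

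Your version is fully explicit and needs no information about the zeros or signs of $P_j$. It also produces a function in $C_c^\infty(\mathbb R)$, which fits the compactly supported test functions used in the proof of Theorem~\ref{133}.

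The paper's version avoids cutoffs and gives a single real-analytic function, a translated and rescaled Gaussian. As written, however, it is slightly loose: an arbitrary open interval on which $P_j>0$ need not be long enough to contain a translate of $I$. One really needs the half-line to the left of all zeros of $P_j$, where $P_j>0$ because its leading term is $(-1)^j2^jx^j$. Your argument avoids this issue entirely.
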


\begin{proof}
Consider the function $f(x)=e^{-x^2}$. Then, for every integer $n \geq 0$, its $n$-th derivative satisfies
\[
f^{(n)}(x)=P_n(x)e^{-x^2},
\]
where $P_n$ is a polynomial of degree $n$. Hence, the set of zeros of $P_n$, denoted by $K_n$, is finite. Consequently, there exists an open interval $I \subset \mathbb{R}$ such that
\[
P_n(y)>0 \quad \text{for all } y \in I.
\]

Therefore, one can choose $c \in \mathbb{R}$ such that
\[
f^{(n)}(x-c) > f_0 \quad \text{for all } x \in I,
\]
for some constant $f_0>0$.

Now define
\[
g(x)=\frac{2}{f_0}f(x-c).
\]
On the one hand, $g \in S(\mathbb{R})$. Moreover,
\[
g^{(n)}(x)=\frac{2}{f_0}f^{(n)}(x-c),
\]
and therefore
\[
g^{(n)}(x)>1, \quad \text{for all } x \in I.
\]

This completes the proof.
\end{proof}

\begin{theorem}
\label{133}
Let $\varphi$ be a symbol such that there exists a compact interval $I$ satisfying $\varphi:I\to I$ then
for each $j \in \mathbb{N}$, there exist constants $C > 0$ and $p > 0$ such that
\[
|(\varphi_n)^j(x)| < C \bigl(1 + \varphi_n^2(x)\bigr)^p,
\]
for every $x \in I$ and for all $n \in \mathbb{N}$ if and only if $C_{\varphi}$ is not expansive on $S(\mathbb R).$
\end{theorem}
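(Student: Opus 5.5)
I will prove both implications by comparing $\|C_\varphi^n f\|_m$ with the derivatives of $\varphi_n$ on $I$. Here $(\varphi_n)^j$ is read as the $j$-th derivative of $\varphi_n$, in analogy with condition C.1, and "expansive" as positively topologically expansive. The key identity is the chain rule (Faà di Bruno) for $(f\circ\varphi_n)^{(k)}$. It expresses this derivative as a finite sum of terms $f^{(i)}(\varphi_n)\prod_l \varphi_n^{(j_l)}$ with $\sum_l j_l = k$. So the growth of $\|C^n_\varphi f\|_m$ is governed by the growth of the derivatives of $\varphi_n$, at least for $x\in I$. Since $\varphi_n(I)\subset I$ and $I$ is compact, $1+\varphi_n^2(x)$ is bounded above and below on $I$ uniformly in $n$. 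On $I$ the hypothesis therefore simply says that $\sup_n\sup_{x\in I}|\varphi_n^{(j)}(x)|<\infty$ for each $j$.

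\emph{Condition $\Rightarrow$ not expansive.} I need one nonzero $f$ with $\sup_n\|C^n_\varphi f\|_m<\infty$ for every $m$. I would take $f\in C_c^\infty(\mathbb R)$ with $\operatorname{supp} f\subset \operatorname{int} I$, or at least supported near $I$, and argue in two steps.
\begin{enumerate}
\item[(a)] Outside $\varphi_n^{-1}(\operatorname{supp} f)$ the function $f\circ\varphi_n$ and its derivatives vanish.
\item[(b)] For points with $\varphi_n(x)\in I$, I would show $x$ stays in a bounded set using C.2, since $|\varphi(x)|\ge |x|^{1/r}$ for large $|x|$. On that set the Faà di Bruno terms are bounded by the uniform bounds on $\varphi_n^{(j)}$, so the weights $(1+|x|^2)^m$ contribute only a constant.
\end{enumerate}
Step (b) is delicate: preimages of $I$ under $\varphi_n$ need not lie in $I$, and the derivative bound is only assumed on $I$. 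I would first try to restrict to $f$ supported in $\bigcap_n \varphi_n(I)$ and the part of $\varphi_n^{-1}(I)$ inside $I$. Failing that, I would add the reading that the hypothesis controls $\varphi_n^{(j)}$ wherever $\varphi_n(x)\in I$, which is what the weight $(1+\varphi_n^2(x))^p$ in C.1 suggests.

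\emph{Not expansive $\Rightarrow$ condition.} I would argue by contraposition. Suppose that for some $j$ the derivatives $\varphi_n^{(j)}$ are unbounded on $I$, i.e.\ there are $n_k$ and $x_k\in I$ with $|\varphi_{n_k}^{(j)}(x_k)|\to\infty$; I must show every nonzero orbit is unbounded. Lemma \ref{exf} supplies, for the compact interval $I$, a function $g$ with $g^{(j)}>1$ on $I$. Testing $\|C^{n_k}_\varphi g\|_m$ at $x_k$ and isolating the term $g'(\varphi_{n_k}(x_k))\varphi_{n_k}^{(j)}(x_k)$ shows the orbit of this particular $g$ is unbounded. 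This only shows non-boundedness for one vector, whereas expansivity requires it for every $f\ne0$. So I would choose the smallest $j$ with unbounded derivatives, making lower-order products bounded, and then need a point of $I$ where $f'\neq0$ along the relevant orbit.

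The main obstacle is exactly this quantifier mismatch and the cancellation in Faà di Bruno. Lemma \ref{exf} only produces a witness function, not control for an arbitrary $f$. I would handle it by a case split. Either every nonzero $f$ has $f'\circ\varphi_{n_k}$ bounded away from zero at suitable points, giving expansivity. Or some $f$ vanishes to high order along $\varphi_{n_k}(x_k)$, which I would try to exclude by choosing $x_k$ via compactness and continuity so that $\varphi_{n_k}(x_k)$ ranges over an open set.
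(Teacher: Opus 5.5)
\textbf{Verdict: the proposal cannot be completed, because the statement is false as written.} The two obstacles you flag are not technicalities to be worked around; they are exactly where the statement breaks, so no choice of test function or case split can close them.

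\textbf{Condition $\Rightarrow$ not expansive fails.} Take $\varphi(x)=x/2$ and $I=[-1,1]$. Then $\varphi(I)\subset I$, $\varphi_n'\equiv 2^{-n}$ and all higher derivatives vanish, so the condition holds (even in your strengthened reading). Yet for $0\neq f\in S(\mathbb R)$ pick $y\neq 0$ with $f'(y)\neq 0$. Evaluating at $x=2^n y$ gives $\|C_\varphi^n f\|_1\geq (1+4^n y^2)\,2^{-n}|f'(y)|\to\infty$, so $C_\varphi$ is expansive, in agreement with Theorem~\ref{th5}. Similarly, $\varphi(x)=x^3$ satisfies the condition on $I=[-\tfrac{1}{2},\tfrac{1}{2}]$, while the paper's final example shows that $C_\varphi$ is expansive.

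This is precisely where your step (b) fails. C.2 only confines $\varphi_n^{-1}(I)$ to a set whose size grows with $n$; here $\varphi_n^{-1}(I)=[-2^n,2^n]$. So the weights $(1+|x|^2)^m$ on $\operatorname{supp}(f\circ\varphi_n)$ are unbounded in $n$, and no bound on $\varphi_n^{(j)}$ compensates. Your fallback $\operatorname{supp} f\subset\bigcap_n\varphi_n(I)=\{0\}$ leaves only $f=0$.

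\textbf{Not expansive $\Rightarrow$ condition also fails.} Take $\varphi(x)=x^2$ and $I=[-1,1]$. Here $\varphi_n(1)=1$ and $\varphi_n'(1)=2^n$, so the condition fails. But every $0\neq f\in C_c^\infty((-\infty,0))$ satisfies $C_\varphi f=0$, so $C_\varphi$ is not expansive; this is the paper's own remark on even degree. Thus the bad branch of your case split really occurs, and the quantifier mismatch you noticed is fatal. Unboundedness of the orbit of the single witness from Lemma~\ref{exf} says nothing about other $f$.

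\textbf{The paper's proof has the same two holes.} It steps over them with incorrect claims rather than resolving them.
\begin{enumerate}
\item In the direction ``not expansive $\Rightarrow$ condition'' it reads non-expansivity as ``$\sup_n\|C_\varphi^n f\|_l<\infty$ for \emph{every} $f$''. That is exactly the quantifier error you identified; it then tests against a function from Lemma~\ref{exf}.
\item In the other direction it asserts $\operatorname{supp}(f\circ\varphi_n)\subset\varphi(I)$ whenever $\operatorname{supp}f\subset I$. This is false in general: for $\varphi(x)=x/2$ one has $\operatorname{supp}(f\circ\varphi_n)=2^n\operatorname{supp}f$. It is precisely the confinement your step (b) lacks.
\end{enumerate}

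\textbf{What can be salvaged.} If you add the hypothesis $\varphi^{-1}(I)\subset I$, then $\operatorname{supp}(f\circ\varphi_n)\subset I$ for $f\in C_c^\infty(\operatorname{int}I)$. Your Fa\`a di Bruno estimate then does show that $C_\varphi$ is not positively expansive. The converse still fails, however, since $x^2$ on $[-1,1]$ satisfies $\varphi^{-1}(I)=I$.
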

\begin{proof}
     Assume that the symbol is not topologically expansive. Then, for each 
$l \in \mathbb{N}$ and every $f \in S(\mathbb{R})$, there exists 
$C_l > 0$ such that
\[
\|C_{\varphi_n} f\|_l < C_l,
\]
for all $n \in \mathbb{N}$. Let $f$ be such that $I \subset \operatorname{supp}(f)$. Then, for each 
$l$, there exists $C_l > 0$ such that $I \subset \operatorname{supp}(f)$ 
and the derivatives of $f$ do not vanish on $I$. Hence, for example of \textbf{Lemma} \ref{exf},
\[
(1+x^2)\, |(C_{\varphi_n}f)'(x)| < C_l,
\]
for $x \in I$, by Faà di Bruno's formula.
        $|\varphi_n^j(x)|<C$
        For  $j=1,\cdots ,l$ y $n\in \mathbb{N}$.
        Reciprocally, 
consider $f \in S(\mathbb{R})$ such that $f \neq 0$ and 
$\operatorname{supp}(f) \subset I$. Since 
\[
\operatorname{supp}(f \circ \varphi_n) \subset \varphi(I),
\qquad n \in \mathbb{N},
\]
we observe that
\begin{equation}
\begin{array}{cc}
(1+x^2)^m |(f \circ \varphi_n)^{(j)}(x)|
& \leq
(1+x^2)^m
\displaystyle\sum_{k}
\binom{j}{k}
f^{(k)}(\varphi_n(x))
\frac{\varphi_n^{(k_1)}(x)}{1!}
\cdots
\frac{\varphi_n^{(k_j)}(x)}{j!}.
\end{array}
\end{equation}

It follows that for $m \in \mathbb{N}$,
\[
\|C_{\varphi_n} f\|_m
\leq
C_{m,I}\,\|f\|_{r_m},
\]
for all $n \in \mathbb{N}$.
\end{proof}

\begin{example}
    Let $\varphi(x) = x^k$, where $k$ is an odd integer greater than $2$. 
Then $\varphi$ is topologically expansive on $S(\mathbb R).$
\end{example}

\begin{proof}
Observe that $\varphi_n(x) = x^{k^n}$. Let $f \in \mathcal{S}(\mathbb{R})$, 
and suppose that $y \in \operatorname{supp}(f)$ with $y \neq 0$. 
Then there exists $x_n$ such that
\[
\varphi_n(x_n) = y.
\]
In fact,
\[
x_n = y^{\frac{1}{k^n}}.
\]
Now observe that
\begin{align*}
    (1+x_n^2)|f'(\varphi_n(x_n))(\varphi_n'(x_n))|&= (1+x_n^2)|f'(y)(\varphi_n'(x_n))|\\
    &=  (1+y^{\frac{2}{k^n}})|f'(y)k^ny^{\frac{k^n-1}{k^n}}|\\
    &\le \|C_{\varphi_n}f\|_{1}
\end{align*}
Consequently, $\varphi$ is topologically expansive.
\end{proof}

\section*{Acknowledgments}
The authors thank to the anonymous referees for their valuable comments to improve this article.

\bibliography{Bibliography}

@article{FERNANDEZ20183503,
title = {Dynamics and spectra of composition operators on the Schwartz space},
journal = {Journal of Functional Analysis},
volume = {274},
number = {12},
pages = {3503-3530},
year = {2018},
issn = {0022-1236},
doi = {https://doi.org/10.1016/j.jfa.2017.11.005},
url = {https://www.sciencedirect.com/science/article/pii/S0022123617304342},
author = {Carmen Fernández and Antonio Galbis and Enrique Jordá}
}

@article{galbis2018composition,
  title={Composition operators on the Schwartz space},
  author={Galbis, Antonio and Jord{\'a}, Enrique},
  journal={Revista matem{\'a}tica iberoamericana},
  volume={34},
  number={1},
  pages={397--412},
  year={2018}
}

@article{FERNANDEZ2020107052,
title = {Spectrum of composition operators on S(R) with polynomial symbols},
journal = {Advances in Mathematics},
volume = {365},
pages = {107052},
year = {2020},
issn = {0001-8708},
doi = {https://doi.org/10.1016/j.aim.2020.107052},
url = {https://www.sciencedirect.com/science/article/pii/S0001870820300773},
author = {Carmen Fernández and Antonio Galbis and Enrique Jordá}
}

@article{bernardes2025generalized,
  title={Generalized hyperbolicity, stability and expansivity for operators on locally convex spaces},
  author={Bernardes Jr, Nilson C and Caraballo, Blas M and Darji, Udayan B and F{\'a}varo, Vin{\'\i}cius V and Peris, Alfred},
  journal={Journal of Functional Analysis},
  volume={288},
  number={2},
  pages={110696},
  year={2025},
  publisher={Elsevier}
}

@article{asensio2025power,
  title={Power boundedness and related properties for weighted composition operators on S (Rd)},
  author={Asensio, Vicente and Jord{\'a}, Enrique and Kalmes, Thomas},
  journal={Journal of Functional Analysis},
  volume={288},
  number={3},
  pages={110745},
  year={2025},
  publisher={Elsevier}
}

@article{albanese2023spectral,
  title={Spectral properties of generalized Ces{\`a}ro operators in sequence spaces},
  author={Albanese, Angela A and Bonet, Jos{\'e} and Ricker, Werner J},
  journal={Revista de la Real Academia de Ciencias Exactas, F{\'\i}sicas y Naturales. Serie A. Matem{\'a}ticas},
  volume={117},
  number={4},
  pages={140},
  year={2023},
  publisher={Springer}
}

@article{albanese2022dynamics,
  title={Dynamics of composition operators on function spaces defined by local and global properties},
  author={Albanese, Angela A and Jord{\'a}, Enrique and Mele, Claudio},
  journal={J. Math. Anal. Appl},
  volume={514},
  pages={126303},
  year={2022}
}

@article{albanese2022spectra,
  title={Spectra and ergodic properties of multiplication and convolution operators on the space S (R)},
  author={Albanese, Angela A and Mele, Claudio},
  journal={Revista Matem{\'a}tica Complutense},
  volume={35},
  number={3},
  pages={739--762},
  year={2022},
  publisher={Springer}
}

@article{Blois2025,
  author    = {Artur Blois and Osmar R. Severiano},
  title     = {Dynamics for Affine Composition Operators on Weighted Bergman Spaces of a Half Plane},
  journal   = {Bulletin of the Brazilian Mathematical Society, New Series},
  year      = {2025},
  volume    = {57},
  number    = {1},
  pages     = {2},
  issn      = {1678-7714},
  doi       = {10.1007/s00574-025-00491-2},
  url       = {https://doi.org/10.1007/s00574-025-00491-2}
}

@misc{blois2026shadowing,
  title={Shadowing phenomenon for composition operators on the Hardy space $H^2(\mathbb{D})$},
  author={Blois, Artur and Eidt, Ben-Hur and Lupatini, Paulo and Severiano, Osmar R},
  year={2026},
  eprint={2603.10575},
  archivePrefix={arXiv}
}

@misc{alvarez2025spectra,
  title={Spectra of composition operators on Paley-Wiener spaces and some consequences},
  author={{\'A}lvarez, Carlos F and Severiano, OR},
  year={2025},
  eprint={2508.19975},
  archivePrefix={arXiv}
}

@article{Alvarez2025,
  author    = {Carlos F. Álvarez and Javier Henríquez-Amador},
  title     = {Dynamical Properties for Composition Operators on {$H^{2}(\mathbb{C}_{+})$}},
  journal   = {Bulletin of the Brazilian Mathematical Society, New Series},
  year      = {2025},
  volume    = {56},
  number    = {1},
  pages     = {10},
  doi       = {10.1007/s00574-024-00435-2},
  url       = {https://doi.org/10.1007/s00574-024-00435-2},
  issn      = {1678-7714}
}

@article{Kalmes2022mean,
  author  = {Thomas Kalmes and Daniel Santacreu},
  title   = {Mean ergodic composition operators on spaces of smooth functions and distributions},
  journal = {Proceedings of the American Mathematical Society},
  volume  = {150},
  number  = {6},
  pages   = {2603--2616},
  year    = {2022},
  doi     = {10.1090/proc/15894},
  url     = {https://doi.org/10.1090/proc/15894}
}
\bibliographystyle{amsplain}

\end{document}